\title[Gromov-Hausdorff stability of global attractors]{Gromov-Hausdorff stability of global attractors\\
of damped wave equations \\
under perturbations
of the domain}
\author{Ngoctu Bui and Jihoon Lee}
\address{Ngoctu Bui\hfill\break
Department of Mathematics, Chonnam National University, \hfill\break
Gwangju 61186, Korea}
\email{buingoctu22112000@gmail.com}
\address{Jihoon Lee\hfill\break
Department of Mathematics, Chonnam National University, \hfill\break
Gwangju 61186, Korea}
\email{jihoon@jnu.ac.kr}
\keywords{Damped wave equation, global attractor, Gromov-Hausdorff stable, perturbations of domain.}
\subjclass[2020]{35B41, 35B41, 35K20, 58D25}
\begin{document}
\maketitle
\numberwithin{equation}{section}
\newtheorem{theorem}{Theorem}[section]
\newtheorem{remark}{Remark}[section]
\newtheorem{definition}{Definition}[section]
\newtheorem{lemma}[theorem]{Lemma}
\newtheorem{corollary}[theorem]{Corollary}
\newtheorem{proposition}[theorem]{Proposition}
\newtheorem{example}[theorem]{Example}

\renewcommand{\footskip}{.5in}

\begin{center}
{\small \it Department of Mathematics, Chonnam National University \\
Gwangju, 61186, Korea}
\end{center}

\thispagestyle{empty}

\begin{abstract}
In this paper, we will make use of the Gromov-Hausdorff distance between compact metric spaces to establish the continuous dependence and the Gromov-Hausdorff stability of global attractors for damped wave equations under perturbations of the domain.
\end{abstract}

\section{ \textbf{INTRODUCTION}}

Let $\Omega$ be an open bounded domain in $\mathbb R^N$ $(N \le 3)$ with a smooth boundary, and $\varepsilon > 0$ a real parameter. Consider the following damped wave equation  (hyperbolic equation) with Dirchlet boundary conditions:
\begin{equation} \label{model}
\begin{cases}
\varepsilon u_{tt} + u_t - \Delta u = -f(u) ~& \mbox{in}\quad \Omega \times (0,\infty), \\
u=0 &\mbox{on} \quad \partial \Omega \times (0, \infty), \\
u(0,x) = u_0(x), \quad u_t(0,x)=u_1(x)& \mbox{in}  \quad \Omega.
\end{cases}
\end{equation}

Suppose $f$ belongs to $C^2(\mathbb{R}, \mathbb{R})$ and  satisfies the following condition: 
\begin{align*}
   \limsup_{|u|\rightarrow +\infty} {\frac{f(u)}{u}} > 0, \quad |f'(u)| \leq l
\end{align*}
and for $N\geq 2$, there exists a constant $C>0$ such that 
\begin{align*}
    |f''(u)|\leq C(|u|^\gamma +1) \mbox{ for $u \in \mathbb{R}$},
\end{align*}
where $ 0 \leq \gamma <\infty \mbox{ if } N=2$,  and $ 0\leq \gamma \leq1 \mbox{ if } N=3. $
We assume that $U_0 = (u_0, u_1)$ belongs to $H^1_0(\Omega) \times L^2(\Omega)$. Then we know that  the equation \eqref{model} is well-posed and has the global attractor in the product space $H^1_0(\Omega) \times L^2(\Omega)$ (see \cite{JK}).

Note that  \eqref{model} is a reaction diffusion equation with Dirchlet boundary condition if $\varepsilon = 0$, and the reaction diffusion
equation also has the global attractor in $H^1_0(\Omega)$ which can be naturally embedded into a compact set in the product space $H^1_0(\Omega) \times L^2(\Omega)$.

In the notable paper by J. K. Hale and G. Raugel  \cite{HR1}, they showed that the global attractors of the damped wave equation \eqref{model} are upper semicontinuous at the global attractor of the reaction diffusion equation (i.e., $\varepsilon=0$).
In a subsequent paper \cite{HR2}, they proved that if all of the equiliblium points of the reaction diffusion equation  are hyperbolic, then 
the global attractors of \eqref{model} are lower semicontinuous at the global attractor of the reaction diffusion equation.

Recently there are a lot of papers on the continuity and stability of global attractors (or inertial manifolds) of parabolic equations under perturbations of the domain and equation (e.g., see \cite{JK, HR1, HR2, LN, LT, LNP, LNP2, LNT, LP, LP2}). In particular, J. Lee et. al. \cite{LNT} considered the continuity and the stability of the dynamical systems on the global attractors induced by the reaction diffusion equations (i.e., $\varepsilon=0$) under perturbation of the domain.

In this paper, 
we study the long time behavior of solutions of hyperbolic equation \eqref{perturbed} obtained by perturbations of the domain in \eqref{model}. In particular,
we employ the Gromov-Hausdorff distance between compact metric spaces to establish the continuous dependence and the Gromov-Hausdorff stability of global attractors for damped wave equations under perturbations of the domain.

For this, we let $\mathcal P(\Omega)$ be the set of $C^2$ diffeomorphisms $h$ from $\Omega$ into $\mathbb R^N$ with $d_{C^2}(h, 1_{\Omega}) < 1$, where $1_{\Omega}$  denotes the identity map on $\Omega$. 
Suppose  $\mathcal P(\Omega)$  has the topology induced by the $C^2$ distance $d_{C^2}$. Then we see that the map $dist: \mathcal P(\Omega)  \to [0, 1)$  given by $dist(h)= d_{C^2}(h, 1_{\Omega})$ is continuous and surjective, and so $\mathcal P(\Omega)$ can be considered as the set of $\delta$-perturbations of $\Omega$ under the distance $d_{C^2}$, where $\delta \in [0, 1)$.


For any $0\leq \delta <1,$ let us consider the equation with domain perturbation in \eqref{model}:
\begin{equation} \label{perturbed-epsilon}
\begin{cases}
\varepsilon u_{tt} + u_t - \Delta u = -f(u)  ~& \mbox{in}\quad \Omega_\delta \times (0,\infty), \\
u=0 &\mbox{on} \quad \partial \Omega_\delta \times (0, \infty), \\
u(0,x) = u_{0,\delta}(x), \quad u_t(0,x)=u_{1,\delta}(x) & \mbox{in}  \quad \Omega_\delta,
\end{cases}
\end{equation}
where  $\Omega_{\delta} = h (\Omega)$ for some $h \in \mathcal P (\Omega)$ with $d_{C^2}(h,1_\Omega)=\delta$. Note that $\Omega_0 =\Omega$, $u_{0,0}=u_0,$ and $u_{1,0}=u_1$.

To simply the notation, we will consider with the case $\varepsilon = 1$ in the equation \eqref{perturbed-epsilon} as follows:
\begin{equation} \label{perturbed}
\begin{cases}
u_{tt} + u_t - \Delta u = -f(u)  ~& \mbox{in}\quad \Omega_\delta \times (0,\infty), \\
u=0 &\mbox{on} \quad \partial \Omega_\delta \times (0, \infty), \\
u(0,x) = u_{0,\delta}(x), \quad u_t(0,x)=u_{1,\delta}(x) & \mbox{in}  \quad \Omega_\delta, 
\end{cases}
\end{equation}
 Then we see that  the equation \eqref{perturbed} is also well-posed in various function spaces. Note that the asymptotic behavior of solutions of the equations \eqref{perturbed-epsilon} and \eqref{perturbed} are equivalent.

Let  $T_\delta (t)$, $t \geq0$, be the semidynamical system on the product space $H_0^1(\Omega_{\delta}) \times L^2(\Omega_{\delta})$   induced by the equation \eqref{perturbed}, that is,  
$$T_\delta (t): H_0^1(\Omega_{\delta}) \times L^2(\Omega_{\delta}) \to H_0^1(\Omega_{\delta}) \times L^2(\Omega_{\delta}), \quad T_\delta (t) ( u_{0,\delta}, u_{1,\delta})= (u(t), \partial_t u(t)),$$
where $ u(t)$ is the solution of  \eqref{perturbed}. Note that the operator $T_\delta (t)$ also yields a semidynamical system on the product space $(H^1_0(\Omega_\delta) \cap H^2(\Omega_\delta))\times H^1_0(\Omega_\delta)$.

In the following, we set
$$X^0_\delta :=H_0^1(\Omega_{\delta}) \times L^2(\Omega_{\delta}) \quad {\rm and} \quad
       X^1_\delta := (H^1_0(\Omega_\delta) \cap H^2(\Omega_\delta))\times H^1_0(\Omega_\delta),$$
and give the norm $\| \cdot \|_{X^s_{\delta}}$ on the product space $ X^s_{\delta}$ ($s=0$ or $1$) defined by
$$\|(u,v)\|_{X^s_{\delta}} = \left(\|u\|^2_{s+1} +\|v\|^2_s\right)^{1/2}$$
for $(u,v)\in X^s_{\delta}$. Here $\|u\|_0 = \|u\|_{L^2(\Omega_\delta)}, \|u\|_1 = \|A_\delta^{1/2}u\|_0$, $\|u\|_2 = \|A_\delta u\|_0$, and $A_\delta$ is  induced by the operator $-\Delta$ defined on $\Omega_\delta$.

We say that a subset $\mathcal A$ of $X^0_{\delta}$ is the global attractor for $T_\delta (t)$ if it is a compact invariant set which attracts all bounded sets in $X^0_{\delta}$.
Under the above hypotheses, we can see that  $T_\delta (t)$ admits the global attractor $\mathcal{A}_\delta $  in $X^0_{\delta}$ (also in $X^1_{\delta}$). In the case, we say that the equation \eqref{perturbed} has the global attractor  $\mathcal{A}_\delta $.  Note that $T_\delta (t)$ is injective on  $\mathcal{A}_\delta $. Hence we know that the restriction of the semidynamical system $T_\delta (t)$ to $\mathcal{A}_\delta $ gives rise to a dynamical system on $\mathcal A_\delta$. We recall that a dynamical system is a pair $(X, \phi)$ such that $X$ is a topological space and $\phi$ a continuous map $\phi: X \times \mathbb R \to X$ satisfying 
$$\phi(x, 0) =x \quad {\rm and}\quad \phi(\phi(x, s), t)=\phi(x, s+t), \quad \forall x \in X,  ~ s,t \in \mathbb R.$$

Let $\mathcal{M}$ be the collection of all compact metric spaces up to isometry.
For any $X, Y \in \mathcal{M}$ and $\varepsilon>0$, an $\varepsilon$-immersion from $X$ to $Y$ is a map $i: X \rightarrow Y$ such that
$$
\left|d_{X}(x, x^{\prime})-d_{Y}(i(x), i(x^{\prime})\right)|<\varepsilon, \quad \forall x, x^{\prime} \in X.
$$
We say that $i: X \rightarrow Y$ is an $\varepsilon$-isometry from $X$ to $Y$ if it is an $\varepsilon$-immersion and satisfies $B(i(X), \varepsilon)=Y$, where $B(i(X), \varepsilon)=\left\{y \in Y: d_{Y}(i(x), y)<\varepsilon\right.$ for some $\left.x \in X\right\}$. The Gromov Hausdorff distance $d_{G H}(X, Y)$ between $X$ and $Y$ is defined as the infimum of $\varepsilon>0$ for which there are $\varepsilon$-isometries $i: X \rightarrow Y$ and $j: Y \rightarrow X$.

Let $\mathcal{DS}$ denote the collection of all dynamical systems on elements of $\mathcal{M}$.
For any $\varepsilon>0$ and a topological space $X$, we denote by $\operatorname{Rep}_{X}(\varepsilon)$ the collection of all continuous maps $\alpha: X \times \mathbb{R} \rightarrow \mathbb{R}$ such that for each $x \in X, \alpha(x,\cdot): \mathbb{R} \rightarrow \mathbb{R}$ is a homeomorphism satisfying $\alpha(x, 0)=0$ for all $x \in X$ and $|\alpha(x, t)-t|<\varepsilon$ for any $t \in \mathbb{R}$. For any $(X, \phi)$ and $(Y, \psi)$ in  $\mathcal{DS}$, the Gromov Hausdorff distance $ D_{GH}(\phi, \psi) $ between $(X, \phi)$ and $(Y, \psi)$  is the infimum of $\varepsilon>0$ for which there are $\varepsilon$-isometries $i: X \rightarrow Y$ and $j: Y \rightarrow X$, and $\alpha \in \operatorname{Rep}_{X}(\varepsilon)$, $\beta\in \operatorname{Rep}_{Y}(\varepsilon)$ such that for any $x\in X$, $y\in Y$ and $t\in [0,1]$,
$$d(i(\phi(x, \alpha(x,t))),\psi(i(x),t))<\varepsilon \text{ and }\\d(j(\psi(y, \beta(y,t))),\phi(j(y),t))<\varepsilon$$
(for more details, see \cite{LM}).

\begin{definition}
    We say that the global attractor $\mathcal{A}_{\delta}$ of the equation $\eqref{perturbed}$ is Gromov-Hausdorff stable under perturbations of the domain if for any $\varepsilon>0$ there exists $\gamma>0$ such that if $d_{C^{2}}(g,h)<\gamma$, ($g \in \mathcal P(\Omega)$ with $d_{C^2}(g, 1_\Omega)=\tilde{\delta}$) then there are $\eta$-isometries $I: \mathcal{A}_{\delta} \rightarrow \mathcal{A}_{\tilde{\delta}}$ and $J: \mathcal{A}_{\tilde{\delta}} \rightarrow \mathcal{A}_{\delta}$, and $\alpha \in \operatorname{Rep}_{\mathcal{A}_{\delta}}(\varepsilon)$ and $\beta \in \operatorname{Rep}_{\mathcal{A}_{\tilde{\delta}}}(\varepsilon)$ such that for any $u_{0} \in \mathcal{A}_{\delta}$, $\tilde{u}_{0} \in \mathcal{A}_{\tilde{\delta}}$ and $t \in[0,1]$,
$$
\begin{aligned}
& \left\| I\left(T_{\delta}\left(u_{0}, \alpha\left(u_{0}, t\right)\right)\right)-T_{\tilde{\delta}}\left(I\left(u_{0}\right), t\right)\right) \|_{X_{\tilde\delta}^0}<\varepsilon, \text { and } \\
& \left\| J\left(T_{\tilde{\delta}}\left(\tilde{u}_{0}, \beta\left(\tilde{u}_{0}, t\right)\right)\right)-T_{\delta}\left(J\left(\tilde{u}_{0}\right), t\right)\right) \|_{X_\delta^0}<\varepsilon .
\end{aligned}
$$
\end{definition}

A subset $\mathcal{R}$ of a topological space $X$ is said to be residual if $\mathcal{R}$ contains a countable intersection of open dense subsets in $X$. We say that a function $f : X \rightarrow Y$ is residually continuous if
there exists a residual subset $\mathcal{R} $ of $X$ such that $f$ is continuous at every point of $\mathcal{R}$, where $Y$ is a topological space.

For any $h_0, h_n \in \mathcal P(\Omega)$, we consider the map $i_n: L^2(\Omega_{\delta_n})\rightarrow L^2(\Omega_{\delta_0})$
$$i_n(u) =u\circ(h_n\circ h_0^{-1}) \text { for } u \in L^2(\Omega_{\delta_n}),$$
and its inverse $i_n^{-1}: L^2(\Omega_{\delta_0})\rightarrow L^2(\Omega_{\delta_n})$
$$i_n^{-1}(v) =v\circ(h_0\circ h_n^{-1}) \text { for } v \in L^2(\Omega_{\delta_0}), $$
where $d_{C^2}(h_0, 1_\Omega)=\delta_0$ and $d_{C^2}(h_n, 1_\Omega) =\delta_n$.
Then we see that $i_n$ and $i_n^{-1}$ are bounded isomorphisms, and the restrictions of $i_n$ on the spaces $H^1_0(\Omega_{\delta_n})$ and $H^2({\Omega_{\delta_n}})$  are also bounded isomorphisms. Moreover 
the restrictions of $i_n^{-1}$ on the spaces $H^1_0(\Omega_{\delta_0})$ and $H^2({\Omega_{\delta_0}})$  are  bounded isomorphisms.

Now we define the map $I_n$ on the product space  $X^0_{\delta_n}= H_0^1(\Omega_{\delta_n}) \times L^2(\Omega_{\delta_n})$ given by 
\begin{equation}\label{I_n}
I_n: X^0_{\delta_n}\rightarrow X^0_{\delta_0}, \ \
I_n(U) =(i_n(u_0), i_n(u_1)) \text { for } U=(u_0, u_1) \in X^0_{\delta_n}
\end{equation}
and its inverse $I_n^{-1}$ on the product space $X^0_{\delta_0}= H_0^1(\Omega_{\delta_0}) \times L^2(\Omega_{\delta_0})$ given by
$$ I_n^{-1}: X^0_{\delta_0}\rightarrow X^0_{\delta_n}, \ \  I_n^{-1}(V) =(i_n^{-1}(v_0), i_n^{-1}(v_1)) \text { for } V=(v_0, v_1) \in X^0_{\delta_0}.$$
Then we see that $I_n$ and $I_n^{-1}$ are bounded isomorphisms, and the restriction of  $I_n$ on the product space $X^1_{\delta_n}$ and the restriction of  $I_n^{-1}$ on the product space $X^1_{\delta_0}$ are also bounded isomorphisms.

With all the notations in mind, we state our main results.

\begin{theorem}\label{thm1}
    The map $\mathcal{G}: \mathcal P(\Omega) \rightarrow\mathcal{M}$ given by $\mathcal{G}(h) = \mathcal{A}_{\delta}$ is residually continuous, where $d_{C^2}(h, 1_\Omega)=\delta$ and $\mathcal{A}_{\delta}$ is the global attractor of \eqref{perturbed}.
\end{theorem}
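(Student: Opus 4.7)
The plan is to argue via upper semicontinuity together with a classical theorem of Fort. Fix a reference $h_0 \in \mathcal{P}(\Omega)$; I will show that the attractor map is upper semicontinuous in the Hausdorff distance after transporting everything to the common ambient space $X^0_{\delta_0}$ via $I_n$, apply Fort's theorem to deduce Hausdorff continuity on a residual subset near $h_0$, and then use that $I_n$ is almost an isometry (since $h_n \circ h_0^{-1} \to 1_{\Omega_{\delta_0}}$ in $C^2$) to upgrade Hausdorff continuity in $X^0_{\delta_0}$ to Gromov-Hausdorff continuity of the intrinsic attractor metric spaces. Because $\mathcal{P}(\Omega)$ is second countable, patching countably many such neighborhoods yields a globally residual set of continuity points.

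The key technical step is upper semicontinuity: for $h_n \to h_0$ in $\mathcal{P}(\Omega)$,
\[
\sup_{U_n \in \mathcal{A}_{\delta_n}} \mathrm{dist}_{X^0_{\delta_0}}\!\bigl(I_n(U_n),\, \mathcal{A}_{\delta_0}\bigr) \longrightarrow 0.
\]
I would establish this by first obtaining uniform $X^1$-bounds on the attractors $\mathcal{A}_{\delta_n}$ and using the compact embedding $X^1 \hookrightarrow X^0$ to conclude precompactness of $\bigcup_n I_n(\mathcal{A}_{\delta_n})$ in $X^0_{\delta_0}$. Then, given any $U_n \in \mathcal{A}_{\delta_n}$, the invariance of $\mathcal{A}_{\delta_n}$ yields a complete bounded trajectory $u_n(\cdot)$ of $T_{\delta_n}$ through $U_n$; transporting by $I_n$ and extracting a diagonal subsequence, I obtain a limit trajectory $u(\cdot)$ in $X^0_{\delta_0}$. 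The change of variables $v \mapsto v \circ h_n \circ h_0^{-1}$ rewrites \eqref{perturbed} on $\Omega_{\delta_n}$ as an equation on $\Omega_{\delta_0}$ with coefficients that converge in $C^1$ to those of the unperturbed Laplacian; passing to the limit in the corresponding energy identity shows that $u(\cdot)$ is a complete bounded trajectory of $T_{\delta_0}$, whence $u(0) \in \mathcal{A}_{\delta_0}$.

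Once upper semicontinuity of the set-valued map $h \mapsto I^{h_0}_h(\mathcal{A}_{\delta(h)})$ is in hand, Fort's theorem---an u.s.c. compact-valued map from a Baire space into a metric space is continuous (in Hausdorff distance) on a dense $G_\delta$---produces a residual set $\mathcal{R}_0$ of Hausdorff-continuity points near $h_0$. To upgrade to Gromov-Hausdorff continuity of $\mathcal{G}$, observe that the $C^2$-convergence $h_n \circ h_0^{-1} \to 1_{\Omega_{\delta_0}}$ forces
\[
(1 - \varepsilon_n)\|U - V\|_{X^0_{\delta_n}} \le \|I_n(U) - I_n(V)\|_{X^0_{\delta_0}} \le (1 + \varepsilon_n)\|U - V\|_{X^0_{\delta_n}}
\]
with $\varepsilon_n \to 0$, uniformly on the uniformly bounded family $\mathcal{A}_{\delta_n}$. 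Combined with Hausdorff convergence of images, this yields natural $\varepsilon$-isometries realizing $d_{GH}(\mathcal{A}_{\delta_n}, \mathcal{A}_{\delta_0}) \to 0$.

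The main obstacle is the continuous dependence of the hyperbolic flow on the perturbation parameter in the second step, particularly the passage to the limit in the energy identity on the moving domain. Unlike the parabolic case (where smoothing is instantaneous), compactness must be extracted from invariance and uniform $X^1$-estimates, and the nonlinearity $f(u)$ has to be controlled after composition with $h_n \circ h_0^{-1}$ under the growth condition on $f''$; this forces a careful interplay between the uniform $X^1$-bounds, the Sobolev embeddings on $\Omega_{\delta_0}$, and the $C^2$-smallness of the perturbation. The remaining ingredients (Fort's theorem, the bi-Lipschitz estimate for $I_n$, and the neighborhood patching) are essentially routine.
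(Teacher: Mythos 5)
Your proposal is correct in outline, and it reaches the same destination as the paper, but it decomposes the argument differently. The paper packages the entire Baire-category step into the abstract Lemma \ref{lemtopo1} (imported from \cite{LM,LNT}), whose hypotheses it then verifies by a contradiction argument: assuming the $\varepsilon$-immersion/commutation property fails along a sequence $h_n\to h_0$, it uses the uniform $X^1$-bounds and the compact embedding to extract a convergent subsequence of initial data and contradicts the quantitative flow-continuity Lemma \ref{lem in}, which is proved by a direct energy estimate on the difference $z_n=v_n-v$ after the change of variables $y=h_0\circ h_n^{-1}(x)$ (Gronwall on $\int(|\partial_t z_n|^2+|\overline H_n\nabla z_n|^2)|\det H_n|\,dx$). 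You instead inline the topological step explicitly --- upper semicontinuity of the transported attractors via complete bounded trajectories, Fort's theorem to get a residual set of Hausdorff-continuity points, the near-isometry of $I_n$ on the uniformly bounded attractors to upgrade to $d_{GH}$, and a patching argument over a countable cover --- which is essentially a re-proof of what Lemma \ref{lemtopo1} encapsulates, so the two routes are mathematically parallel rather than genuinely divergent at the topological level. Where you do differ substantively is in the PDE ingredient: your soft compactness/weak-limit passage along complete trajectories suffices for upper semicontinuity (the uniform $X^1$-bounds of Lemma \ref{theo21} and the global Lipschitz bound $|f'|\le l$ make the limit passage work, and the growth condition on $f''$ is not really needed here), but it yields only qualitative convergence. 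The paper's quantitative Lemma \ref{lem in} delivers an explicit rate in terms of $\||\det H_n|-1\|_\infty$ and $\|I-\overline H_n\|_\infty$, and this stronger statement is reused in the proof of Theorem \ref{thm2} (the equicontinuity Lemma \ref{continuity on h} and the construction of the reparametrizations); so if you intend to continue to the Gromov-Hausdorff stability of the dynamical systems, you would still need to prove something like Lemma \ref{lem in} anyway. One small point to make explicit in your patching step: continuity at $h_1$ of the $h_0$-based set-valued map transfers to the $h_1$-based map because $I^{h_1}_{h}=I^{h_1}_{h_0}\circ I^{h_0}_{h}$ and $I^{h_1}_{h_0}$ is a fixed bi-Lipschitz isomorphism; this is routine but should be said.
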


\begin{theorem}\label{thm2}
    The global attractor $\mathcal{A}_{\delta}$ induced by $\eqref{perturbed}$ is residually Gromov-Hausdorff stable under perturbations of the domain. More precisely, the map $\Phi: \mathcal P(\Omega)\to \mathcal{DS}$ given by $\Phi(h) = T_{\delta}$ is residually continuous, where $T_{\delta}$ is the dynamical system on $\mathcal{A}_\delta$, and $\delta =d_{C^2}(h,1_\Omega)$.
\end{theorem}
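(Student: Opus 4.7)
\textbf{Proof proposal for Theorem \ref{thm2}.}

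The plan is to build the continuity of $\Phi$ on top of the residual continuity of $\mathcal{G}$ from Theorem \ref{thm1}. Let $\mathcal{R}\subset\mathcal P(\Omega)$ be the residual subset on which $\mathcal{G}$ is continuous, and fix $h_0\in\mathcal{R}$ with $\delta_0=d_{C^2}(h_0,1_\Omega)$. I would prove that $\Phi$ is continuous at $h_0$: for any sequence $h_n\to h_0$ with $\delta_n=d_{C^2}(h_n,1_\Omega)$, I would produce $\varepsilon_n$-isometries and time reparametrizations witnessing $D_{GH}(T_{\delta_n},T_{\delta_0})\to 0$. Theorem \ref{thm1} already provides the metric convergence $d_{GH}(\mathcal{A}_{\delta_n},\mathcal{A}_{\delta_0})\to 0$; the genuinely new content is the dynamical compatibility up to a small time shift.

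For the spatial maps I would use $I_n$ and $I_n^{-1}$ from \eqref{I_n}, restricted to the attractors and, if needed, post-composed with a short Lipschitz projection onto $\mathcal{A}_{\delta_0}$ or $\mathcal{A}_{\delta_n}$ (well-defined by the Hausdorff closeness from Theorem \ref{thm1}). Since $h_n\to h_0$ in $C^2$, the diffeomorphism $h_n\circ h_0^{-1}$ tends to the identity in $C^2$, and a change-of-variables argument shows $I_n$ is an $o(1)$-isometry when restricted to sets uniformly bounded in $X^1$, in particular to the attractors. For the time reparametrizations, the identities $\alpha_n(u,t)=t$ and $\beta_n(v,t)=t$ lie in $\operatorname{Rep}(\varepsilon_n)$ trivially, so the requirement reduces to a pure commutation estimate at $t\in[0,1]$.

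The heart of the proof is the uniform trajectory bound
\[
\sup_{U\in\mathcal{A}_{\delta_n}}\;\sup_{t\in[0,1]}\;\bigl\|I_n\bigl(T_{\delta_n}(t)U\bigr)-T_{\delta_0}(t)\bigl(I_nU\bigr)\bigr\|_{X^0_{\delta_0}}\longrightarrow 0,
\]
together with its mirror for $I_n^{-1}$. To prove it, let $u$ solve \eqref{perturbed} on $\Omega_{\delta_n}$ with initial data $U\in\mathcal{A}_{\delta_n}$, let $v$ solve \eqref{perturbed} on $\Omega_{\delta_0}$ with initial data $I_nU$, and set $w=i_n(u)-v$. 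A short computation shows that $w$ satisfies a damped wave equation on $\Omega_{\delta_0}$ with zero initial data and forcing
\[
F_n=\bigl(i_n\Delta_n i_n^{-1}-\Delta_0\bigr)(i_n u)+\bigl(f(v)-i_n f(u)\bigr).
\]
The first summand tends to $0$ in $L^2(\Omega_{\delta_0})$ uniformly in $U$ because the coefficients of $i_n\Delta_n i_n^{-1}$ converge in $C^0$ to those of $\Delta_0$ and $u$ stays uniformly bounded in $H^2(\Omega_{\delta_n})$; the second is controlled by the Lipschitz bound $|f'|\le l$, the growth assumption on $f''$, and Sobolev embeddings. A standard energy estimate for the damped wave equation combined with Gronwall's inequality then yields the claimed uniform bound.

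The main obstacle will be ensuring that this estimate is genuinely \emph{uniform} over $U\in\mathcal{A}_{\delta_n}$ and in $n$. This requires a uniform $X^1$-bound for the family $\{\mathcal{A}_{\delta_n}\}$, obtained by transplanting the regularity theory of \cite{JK,HR1,HR2} to the moving domain and verifying that the constants are stable for $\delta$ in a neighbourhood of $\delta_0$, together with sharp control of $f(v)-i_n f(u)$ in the critical case $N=3$, $\gamma=1$, where only the $L^6$ Sobolev embedding is available. Once these uniform bounds are secured, combining the trajectory estimate with the isometry property of $I_n$ on the attractors delivers both inequalities in the definition of $D_{GH}$ on dynamical systems, giving continuity of $\Phi$ at every $h_0\in\mathcal{R}$ and hence the residual continuity claim.
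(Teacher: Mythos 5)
Your core estimate is a legitimate alternative to the paper's route, but there is one genuine gap. The paper does not prove a uniform-over-the-attractor trajectory bound directly; instead it combines the pointwise convergence of Lemma \ref{lem in} (for a single convergent sequence of initial data, upgraded by compactness and a contradiction argument as in the proof of Theorem \ref{thm1}) with a separate equicontinuity statement, Lemma \ref{continuity on h}, which gives Lipschitz-in-initial-data and uniform small-time continuity of $T_\delta$ uniformly for $h$ near $h_0$. Your direct Gronwall estimate on $w=i_n(u)-v$ with zero initial difference is essentially a uniform rerun of the energy computation in Lemma \ref{lem in}, and it is plausible given the uniform $X^1_\delta$ bounds of Lemma \ref{theo21} (whose constants are stated uniformly for $0\le\delta\le 1$); it buys you a cleaner commutation estimate without the compactness/contradiction step. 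That part I would accept modulo the technical work you yourself flag.

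The gap is in how you certify that $I_n$ (or its projected version) is an $\varepsilon$-\emph{isometry} between the attractors rather than merely an $\varepsilon$-immersion. The definition of $D_{GH}$ requires $B(I_n(\mathcal{A}_{\delta_n}),\varepsilon)=\mathcal{A}_{\delta_0}$, i.e.\ near-surjectivity of the concrete map $I_n$ onto $\mathcal{A}_{\delta_0}$. You attribute this to ``the Hausdorff closeness from Theorem \ref{thm1}'', but Theorem \ref{thm1} only asserts that $h\mapsto\mathcal{A}_\delta$ is residually continuous into $\mathcal{M}$ with the abstract Gromov--Hausdorff distance; that guarantees the existence of \emph{some} $\varepsilon_n$-isometries, not that the specific identification $I_n$ carries $\mathcal{A}_{\delta_n}$ onto an $\varepsilon$-net of $\mathcal{A}_{\delta_0}$. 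This lower-semicontinuity statement for the push-forward sets, $\mathcal{A}_{\delta_0}\subset B(I_n(\mathcal{A}_{\delta_n}),\varepsilon)$, is exactly the ``first claim'' in the paper's proof (the inclusions $I_{hk}(\mathcal{A}_\delta)\subset B(\mathcal{A}_{\delta'},\varepsilon)$ and $\mathcal{A}_{\delta'}\subset B(I_{hk}(\mathcal{A}_\delta),\varepsilon)$ on a residual set), and it requires its own Baire-category argument following Step 1 of Theorem 1.2 in \cite{LNT}; it is where the residual set actually comes from and cannot be read off from the statement of Theorem \ref{thm1}. A secondary issue: a ``short Lipschitz projection'' onto the compact, non-convex set $\mathcal{A}_{\delta_0}$ in a Hilbert space need not exist (nearest-point maps to such sets are in general neither single-valued nor continuous); since $\varepsilon$-isometries in this framework need not be continuous, you should instead use an arbitrary near-point selection and then transfer your commutation estimate to the selected points via the uniform Lipschitz dependence on initial data — which is precisely the content of Step 1 of Lemma \ref{continuity on h}, so some form of that lemma is still needed in your argument.
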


\section{{PROOF OF THEOREM \ref{thm1}}}

To prove the therem, we begin with  the following well-known lemma concerning the existence and uniqueness of the solutions to the given equation \eqref{perturbed}.

\begin{lemma}\label{lem wellposed}
    The perturbed equation \eqref{perturbed} has a unique weak solution $u(t)$ which satisfies  $(u(t), \partial_t u(t))\in C([0,\infty);X_\delta^0)$.
\end{lemma}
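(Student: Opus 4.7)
The plan is to rewrite \eqref{perturbed} as a first-order abstract semilinear evolution equation on $X^0_\delta$ and invoke standard semigroup theory. Setting $U=(u,u_t)$, the equation takes the form $U_t = \mathcal{L}_\delta U + \mathcal{F}(U)$, where
$$\mathcal{L}_\delta = \begin{pmatrix} 0 & I \\ -A_\delta & -I \end{pmatrix} : X^1_\delta \subset X^0_\delta \to X^0_\delta, \qquad \mathcal{F}(U) = (0, -f(u)).$$
I would first show that $\mathcal{L}_\delta$ generates a $C_0$-semigroup on $X^0_\delta$ by verifying the hypotheses of the Lumer--Phillips theorem: equipping $X^0_\delta$ with its natural inner product, a short computation gives $\langle \mathcal{L}_\delta U, U\rangle_{X^0_\delta}=-\|u_t\|_0^2\leq 0$, so $\mathcal{L}_\delta$ is dissipative, and surjectivity of $I-\mathcal{L}_\delta$ reduces to a coercive elliptic problem for $A_\delta$ on $\Omega_\delta$ that is solved by Lax--Milgram.

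The second step is to check that $\mathcal{F}$ is globally Lipschitz on $X^0_\delta$. The hypothesis $|f'(u)|\leq l$ yields, via the mean value theorem and Poincar\'e's inequality,
$$\|f(u)-f(v)\|_{L^2(\Omega_\delta)}\leq l\,\|u-v\|_{L^2(\Omega_\delta)}\leq C\,\|u-v\|_{H^1_0(\Omega_\delta)},$$
so $\|\mathcal{F}(U)-\mathcal{F}(V)\|_{X^0_\delta}\leq C\|U-V\|_{X^0_\delta}$. Combining these two facts, the classical theory of semilinear evolution equations (variation of constants together with a contraction-mapping argument, as in Pazy's monograph) produces a unique mild solution $U\in C([0,T];X^0_\delta)$ for some $T>0$, and the global Lipschitz property of $\mathcal{F}$ immediately upgrades this to a global solution $U\in C([0,\infty);X^0_\delta)$ with initial datum $(u_{0,\delta},u_{1,\delta})$. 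A standard density/approximation argument (smoothing the initial data into $X^1_\delta$ so that the mild solution is strong, then passing to the limit) identifies this mild solution with a weak solution of \eqref{perturbed} in the usual PDE sense.

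I do not anticipate any serious obstacle, because the Lipschitz assumption $|f'|\leq l$ sidesteps the nonlinear a priori estimates one would otherwise need for global existence in the critical range. The one point worth care is to track how the various constants depend on $\delta$; since $h$ is close to the identity in $C^2$, the Poincar\'e and Sobolev constants on $\Omega_\delta$ as well as the semigroup bound can be taken uniform in $\delta$ over $\mathcal{P}(\Omega)$, a fact that will be important in the later sections when comparing the semigroups $T_{\delta_n}(t)$ through the isomorphisms $I_n$ of \eqref{I_n}.
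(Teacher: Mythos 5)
Your argument is correct and is exactly the standard proof: the paper itself gives no proof of this lemma, stating it as well known (the well-posedness is attributed to the literature, e.g.\ \cite{JK}), and the route you take --- first-order reformulation on $X^0_\delta$, Lumer--Phillips for the wave operator, global Lipschitz continuity of $U\mapsto(0,-f(u))$ from $|f'|\leq l$, and the mild-solution contraction argument --- is precisely the standard machinery being invoked. Your closing remark about uniformity of the constants in $\delta$ over $\mathcal{P}(\Omega)$ is a sensible observation that is indeed used implicitly in the later comparison arguments.
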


\begin{lemma}\label{theo21}
    Let $0\leq \delta\leq 1$. There exist a positive constant $C>0$ and, for any $r_0>0$, $r_1>0$, two positive constants $C^*(r_0)$ and $C^*(r_1)$ such that, for any solution $u_\delta(t)$ of equation \eqref{perturbed} with $\|u_{1,\delta}\|^2_i + \|u_{0,\delta}\|^2_{i+1} \leq r_i ^2, (i=0,1),$ we have the following estimate: for any $t\geq 0$,
    \begin{align*}
        \left\|\frac{\partial^2 u_\delta (t)}{\partial^2 t}\right\|^2_0 +\left\|\frac{\partial u_\delta (t)}{\partial t}\right\|^2_1 + \|u_\delta(t)\|^2_2 \leq C^*(r_0)+ C^*(r_1)e^{-C t}.
    \end{align*}
\end{lemma}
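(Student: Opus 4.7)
The plan is a three-step bootstrap: (i) an $X^0_\delta$-energy estimate for $(u, u_t)$, (ii) the same type of estimate applied to the time-differentiated equation in order to control $(u_t, u_{tt})$, and (iii) elliptic regularity applied to \eqref{perturbed} itself to recover $\|u\|_2$. For (i), I would use the standard energy
$$E_0(t) = \tfrac12\|u_t\|_0^2 + \tfrac12\|u\|_1^2 + \int_{\Omega_\delta} F(u)\,dx, \qquad F(s) := \int_0^s f(\sigma)\,d\sigma,$$
which satisfies $\tfrac{d}{dt}E_0 = -\|u_t\|_0^2$ after testing \eqref{perturbed} against $u_t$. The growth condition $\limsup_{|u|\to\infty} f(u)/u > 0$ together with the Poincar\'e inequality gives $\int_{\Omega_\delta} F(u)\,dx \geq -c_1 - \tfrac14 \|u\|_1^2$, so $E_0$ is equivalent to $\|(u,u_t)\|_{X^0_\delta}^2$ up to an additive constant. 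Adding a small multiple of $\int_{\Omega_\delta} u\,u_t\,dx$ to $E_0$ and following the standard perturbed-energy argument (cf.\ \cite{JK}) yields $\tfrac{d}{dt}\tilde E_0 + C\tilde E_0 \leq K$, and Gronwall gives $\|u_t(t)\|_0^2 + \|u(t)\|_1^2 \leq C^*(r_0) + C^*(r_0) e^{-Ct}$.

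For (ii), set $v = u_t$ and differentiate \eqref{perturbed} in time to obtain $v_{tt} + v_t - \Delta v = -f'(u)\,v$, with initial data $v(0) = u_{1,\delta}$ and $v_t(0) = \Delta u_{0,\delta} - u_{1,\delta} - f(u_{0,\delta})$, which are bounded in $H^1_0(\Omega_\delta) \times L^2(\Omega_\delta)$ in terms of $r_1$ (using the $X^1_\delta$-bound and $|f(s)| \leq l|s| + |f(0)|$). Since $|f'(u)| \leq l$, the coefficient $f'(u)$ is a uniformly bounded multiplier, so the same perturbed-energy machine applied to $v$ in place of $u$ yields
$$\|u_{tt}(t)\|_0^2 + \|u_t(t)\|_1^2 \leq C^*(r_1) + C^*(r_1) e^{-Ct}.$$
For (iii), I would rewrite \eqref{perturbed} as $-\Delta u = -u_{tt} - u_t - f(u)$ and invoke the $H^2$-elliptic-regularity isomorphism $-\Delta : H^1_0(\Omega_\delta)\cap H^2(\Omega_\delta)\to L^2(\Omega_\delta)$ to obtain $\|u(t)\|_2^2 \leq C(\|u_{tt}(t)\|_0^2 + \|u_t(t)\|_0^2 + \|f(u(t))\|_0^2)$; the last term is $\leq C(1 + \|u(t)\|_1^2)$ by the Lipschitz bound on $f$ and is already controlled by step (i). Summing the three bounds gives the claim.

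The main obstacle I anticipate is uniformity of every constant in $\delta \in [0,1]$: the Poincar\'e constant, the $H^2$-elliptic-regularity constant for $-\Delta$ with Dirichlet data, and the Sobolev embeddings used to handle the nonlinearity all a priori depend on $\Omega_\delta$. This is saved by the fact that $\Omega_\delta = h(\Omega)$ with $d_{C^2}(h, 1_\Omega) \leq 1$, so $\partial \Omega_\delta$ is uniformly $C^2$ and the constants are bounded in terms of $\Omega$ alone. A secondary technical point is justifying the formal differentiation in step (ii), which is handled by a Galerkin approximation in $X^1_\delta$ followed by passage to the limit in the resulting energy identities.
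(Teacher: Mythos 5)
The paper does not prove this lemma at all: its ``proof'' is a one-line citation of Theorem 2.5 in \cite{HR1}, and your three-step bootstrap (dissipative energy estimate in $X^0_\delta$, the same estimate for the time-differentiated equation, then elliptic regularity for $\|u\|_2$) is essentially the standard argument behind that citation, so your approach matches the intended one and is correct. The only point to tighten is in step (ii): to recover the precise form of the conclusion, the non-decaying constant there must depend only on $r_0$ (you wrote $C^*(r_1)$), which your argument does deliver provided you treat $-f'(u)u_t$ as an $L^2$-forcing already controlled by step (i) --- so that only the initial energy of the differentiated problem, not the asymptotic bound, involves $r_1$ --- rather than invoking ``the same machine'' verbatim, since the differentiated equation has no analogue of the sign condition on $f$ that produced dissipativity in step (i).
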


\begin{proof}
    See the proof of Theorem 2.5 in \cite{HR1}.
\end{proof}

Now we introduce a lemma which is crucial for the proof of Theorem \ref{thm1}.
\begin{lemma}\label{lem in}
    Let $\{h_n\}_{n\in \mathbb{N}}$ be a sequence in $\mathcal P(\Omega)$, and $\{V_{0,n}\}_{n\in \mathbb{N}}$ be a sequence in $X^0_{\delta_0}$  which is bounded in $X^1_{\delta_0}$ such that
    \begin{align*}
        h_n \rightarrow h_0 \mbox{ and } V_{0,n}\rightarrow V_0 \mbox{ as } n\rightarrow \infty.
    \end{align*}
    Then for any $T>0$, we have
    \begin{align*}
        \|I_n(T_{\delta_n}(I_n^{-1}(V_{0,n}),t))-T_{\delta_0}(V_0, t)\|_{X^0_{\delta_0}}\rightarrow 0 \mbox{ for any } t\in [0, T],
    \end{align*}
    where $\delta_n = d_{C^2}(h_n, 1_\Omega)$ for $n = 0,1,2,...$. 
\end{lemma}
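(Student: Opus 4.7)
The plan is to pull everything back to the reference domain $\Omega_{\delta_0}$ via the $C^2$ diffeomorphism $\phi_n := h_n \circ h_0^{-1}: \Omega_{\delta_0} \to \Omega_{\delta_n}$, which converges to $1_{\Omega_{\delta_0}}$ in $C^2$ as $n \to \infty$. Let $u_n$ denote the solution of \eqref{perturbed} on $\Omega_{\delta_n}$ with initial datum $I_n^{-1}(V_{0,n})$, and let $u$ denote the solution on $\Omega_{\delta_0}$ with initial datum $V_0$. Define $\tilde u_n(t,x) := u_n(t,\phi_n(x))$ for $x \in \Omega_{\delta_0}$, so that $I_n(T_{\delta_n}(I_n^{-1}(V_{0,n}),t)) = (\tilde u_n(t), \partial_t \tilde u_n(t))$. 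Thus it suffices to show $(\tilde u_n(t), \partial_t \tilde u_n(t)) \to (u(t), \partial_t u(t))$ in $X^0_{\delta_0}$ for each $t \in [0,T]$.

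A change of variables shows that $\tilde u_n$ satisfies on $\Omega_{\delta_0}$
\begin{equation*}
\partial_{tt} \tilde u_n + \partial_t \tilde u_n - L_n \tilde u_n = -f(\tilde u_n), \qquad \tilde u_n = 0 \text{ on } \partial\Omega_{\delta_0},
\end{equation*}
where $L_n$ is a second-order elliptic operator whose coefficients are algebraic expressions in $D\phi_n$, $D^2\phi_n$, and $D\phi_n^{-1}$. Because $\phi_n \to 1_{\Omega_{\delta_0}}$ in $C^2$, these coefficients converge uniformly to those of $\Delta$, and one has
\begin{equation*}
\|(L_n - \Delta) v\|_{L^2(\Omega_{\delta_0})} \le \omega_n \, \|v\|_{H^2(\Omega_{\delta_0})}
\end{equation*}
for all $v \in H^2 \cap H^1_0(\Omega_{\delta_0})$, with $\omega_n \to 0$. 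Moreover, the initial data satisfy $\tilde u_n(0) = V_{0,n}^{(1)} \to V_0^{(1)}$ in $H^1_0$ and $\partial_t \tilde u_n(0) = V_{0,n}^{(2)} \to V_0^{(2)}$ in $L^2$ by hypothesis. Setting $w_n := \tilde u_n - u$, one obtains
\begin{equation*}
\partial_{tt} w_n + \partial_t w_n - \Delta w_n = (L_n - \Delta)\tilde u_n - \bigl(f(\tilde u_n) - f(u)\bigr),
\end{equation*}
with $(w_n(0), \partial_t w_n(0)) \to 0$ in $X^0_{\delta_0}$. Testing against $\partial_t w_n$, using the Lipschitz bound $|f'| \le l$ together with Cauchy--Schwarz and Young's inequality, yields for the standard energy $E(w_n) := \tfrac12(\|\partial_t w_n\|_0^2 + \|w_n\|_1^2)$ an estimate of the form
\begin{equation*}
\frac{d}{dt} E(w_n)(t) \le C \, E(w_n)(t) + \omega_n^2 \, \|\tilde u_n(t)\|_{H^2(\Omega_{\delta_0})}^2.
\end{equation*}
Gronwall's inequality on $[0,T]$, combined with $E(w_n)(0) \to 0$, then gives $E(w_n)(t) \to 0$ uniformly on $[0,T]$, provided $\|\tilde u_n(t)\|_{H^2}$ remains uniformly bounded on $[0,T]$.

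The main obstacle is precisely securing this uniform $H^2$ bound. Lemma \ref{theo21} controls $\|u_n(t)\|_2 = \|A_{\delta_n} u_n(t)\|_{L^2(\Omega_{\delta_n})}$ in terms of the initial $X^1_{\delta_n}$ norm of $I_n^{-1}(V_{0,n})$. Since $\{V_{0,n}\}$ is bounded in $X^1_{\delta_0}$ by hypothesis and the operator norm $\|I_n^{-1}\|_{X^1_{\delta_0} \to X^1_{\delta_n}}$ is dominated by a polynomial expression in $\|\phi_n^{-1}\|_{C^2}$, which is uniformly bounded because $\phi_n \to 1_{\Omega_{\delta_0}}$ in $C^2$, the initial data $I_n^{-1}(V_{0,n})$ are uniformly bounded in $X^1_{\delta_n}$. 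Lemma \ref{theo21} then yields a uniform bound on $\|u_n(t)\|_{H^2(\Omega_{\delta_n})}$ for $t \in [0,T]$, which transfers to a uniform bound on $\|\tilde u_n(t)\|_{H^2(\Omega_{\delta_0})}$ via the chain-rule estimate $\|v \circ \phi_n\|_{H^2(\Omega_{\delta_0})} \le C(\|\phi_n\|_{C^2}) \, \|v\|_{H^2(\Omega_{\delta_n})}$. With this uniform bound in hand, the Gronwall argument closes and the lemma follows.
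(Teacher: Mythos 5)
Your argument is correct and rests on the same pillars as the paper's: pull everything back to the fixed domain $\Omega_{\delta_0}$ via $h_n\circ h_0^{-1}$, run an energy estimate on the difference of the two trajectories tested against its time derivative, close with Gronwall, and use Lemma \ref{theo21} together with the uniform boundedness of $I_n^{-1}$ to supply the uniform $X^1$ regularity that makes the estimate legitimate. The execution differs in one genuine respect. You transform the equation into strong form, so the domain perturbation appears as a residual forcing $(L_n-\Delta)\tilde u_n$ measured in $L^2$; this is clean, but it prices the error against the $H^2$ bound on $\tilde u_n$ and requires the first-order coefficients of $L_n$, which involve second derivatives of the diffeomorphism, to vanish uniformly --- i.e.\ it genuinely uses the $C^2$ convergence $h_n\to h_0$. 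The paper instead never writes the transformed operator: it tests the equation on $\Omega_{\delta_n}$ with the pulled-back multiplier $i_n^{-1}(\partial_t z_n)$ and changes variables in the integrals, so the perturbation enters only through the Jacobian $|\det H_n|$ and the matrix $\overline H_n=(H_n^{-1})^{\mathsf T}$, both built from first derivatives alone; the resulting error terms carry the small factors $\||\det H_n|-1\|_\infty$ and $\|I-\overline H_n\|_\infty$ against time-integrated $H^1$ norms, and the Gronwall quantity is the weighted energy $\int(|\partial_t z_n|^2+|\overline H_n\nabla z_n|^2)|\det H_n|\,dx$, which must be compared with the standard energy at the end using $\inf|\det H_n|\ge\tfrac12$. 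Your version is arguably more transparent, the paper's is slightly more economical in the regularity of the perturbation it consumes; both are valid here since $\mathcal P(\Omega)$ carries the $C^2$ topology. One small point you should make explicit: since $\{V_{0,n}\}$ is bounded in $X^1_{\delta_0}$ and converges in $X^0_{\delta_0}$, the limit $V_0$ lies in $X^1_{\delta_0}$ by weak compactness, so $\partial_t u\in H^1_0(\Omega_{\delta_0})$ and testing the difference equation against $\partial_t w_n$ (in particular the identity $(-\Delta w_n,\partial_t w_n)=\tfrac12\tfrac{d}{dt}\|w_n\|_1^2$) is justified; the paper records the analogous fact before its computation.
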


\begin{proof}
    Take $T>0$, and let $U_{0,n} = I_n^{-1}(V_{0,n})=(u_{0,n},u_{1,n})$. Let  $u_n(t)$ be the weak solution of the equation
    \begin{align}\label{eq:3.1}
        \left\{\begin{array}{ll}
u_{tt} + u_t - \Delta u = -f(u) ~& \mbox{in}\quad \Omega_{\delta_n} \times (0,\infty), \\
u=0 &\mbox{on} \quad \partial \Omega_{\delta_n} \times (0, \infty), \\
u(0,x) = u_{0,n}(x), \quad u_t(0,x)=u_{1,n}(x),& \mbox{in}  \quad \Omega_{\delta_n},
\end{array}\right.\end{align}
and $v(t)$  be the weak solution of the equation
\begin{align}\label{eq:3.2}
    \left\{\begin{array}{ll}
v_{tt} + v_t - \Delta v = -f(v)~& \mbox{in}\quad \Omega_{\delta_0} \times (0,\infty), \\
v=0 &\mbox{on} \quad \partial \Omega_{\delta_0} \times (0, \infty), \\
v(0,x) = v_0(x), \quad v_t(0,x)=v_1(x),& \mbox{in}  \quad \Omega_{\delta_0},
\end{array}\right.\end{align}
Let $U_n(t) = (u_n(t), \partial_t u_n(t))$, $V(t) = (v(t), \partial_t v(t))$, $V_n = I_n(U_n)$ and $Z_n = V_n - V$. We need to show that $\|Z_n (t)\|_{X^0_{\delta_0}}\rightarrow 0 \mbox{ for all } t\in [0, T].$ 

Since $V_{0,n} $ is bounded in $X^1_{\delta_0}$ for $n=0,1,2,...$, and $I_n^{-1} $ is a bounded isomorphism from $X_{\delta_0}^i$ to $X_{\delta_n}^i$ for $i\in \{0,1\}$, we have that the initial values $U_{0,n}$ is also bounded in $X_{\delta_n}^1$. Hence, by Lemma \ref{theo21}, we obtain the boundedness of $U_n(t)$ in $X_{\delta_n}^1$. Similarly we see that $V_n(t)$ belongs to the space $X_{\delta_0}^1$. Hence we know that $Z_n \in X_{\delta_0}^1$, and so $i_n^{-1}(\partial_t z_n) \in L^2(0,T;H_0^1(\Omega_{\delta_n}))$. We take the inner product with $i_n^{-1}(\partial_t z_n)$ on the both sides of equation \eqref{eq:3.1} to deduce that
\begin{align}\label{eq:3.3}
\int_{\Omega_{\delta_n}}\partial_{tt}u_n (i_n^{-1}(\partial_t z_n)) dx &+ \int_{\Omega_{\delta_n}}\partial_tu_n (i_n^{-1}(\partial_t z_n)) dx \notag\\
&+\int_{\Omega_{\delta_n}}\nabla u_n \cdot \nabla (i_n^{-1}(\partial_tz_n))dx 
=-\int_{\Omega_{\delta_n}}f(u_n) i_n^{-1}(\partial_t z_n) dx.
\end{align}
Let $y = h_0 \circ h_n^{-1}(x)$ for $x \in \Omega_{\delta_n}$. By the change of variable and \eqref{eq:3.3} (using the notation $x$ instead of $y$ for convenience), we get  that
\begin{align} \label{eq:3.4}
&\int_{\Omega_{\delta_0}}\partial_{tt}v_n + \partial_t z_n|\det H_n|dx+ \int_{\Omega_{\delta_0}}\partial_{t}v_n  \partial_t z_n|\det H_n|dx\notag \\
&+\int_{\Omega_{\delta_0}}\overline{H}_n\nabla v_n \cdot \overline{H}_n\nabla \partial_t z_n |\det H_n| dx =-\int_{\Omega_{\delta_0}} f(v_n) \partial_t z_n|\det H_n| dx,
\end{align}
where
\begin{equation*}\label{eq:3.5}
H_n(x)=D(h_n\circ h_0^{-1}(x))=\left(\dfrac{\partial (h_n\circ h_0^{-1}(x))_i}{\partial x_j}\right)_{N\times N}, \; x\in \Omega_{\delta_0},
\end{equation*}
and $\overline{H}_n=(H_n^{-1})^{\mathsf{T}}$ is the transpose of the inverse $H_n^{-1}$.
\vskip 5 mm
We take the inner product with $\partial_t z_n$ on both sides of the equation \eqref{eq:3.2} to deduce that
\begin{equation}\label{eq:3.6}
\int_{\Omega_{\delta_0}}v_{tt}\partial_t z_n dx+\int_{\Omega_{\delta_0}}v_t \partial_t z_n dx+ \int_{\Omega_{\delta_0}}\nabla v\cdot \nabla \partial_t z_ndx=-\int_{\Omega_{\delta_0}}f(v) \partial_t z_ndx.\end{equation}
By subtracting \eqref{eq:3.6} from \eqref{eq:3.4}, we get
\begin{align}\label{eq:3.7}
    &\int_{\Omega_{\delta_0}}\partial_{tt}z_n \partial_tz_n |\det H_n| dx+\int_{\Omega_{\delta_0}}\partial_{t}z_n \partial_tz_n |\det H_n| dx\\
&\;\;\;+\int_{\Omega_{\delta_0}}(|\det H_n|-1)(v_t+v_{tt}) \partial_tz_n dx +\int_{\Omega_{\delta_0}}\overline{H}_n\nabla z_n \cdot \overline{H}_n\nabla \partial_t z_n |\det H_n| dx\notag\\
&=\int_{\Omega_{\delta_0}}\left(I-\overline{H}_n\right)\nabla v \cdot \overline{H}_n\nabla \partial_tz_n |\det H_n| dx\notag\\
&\;\;\;+\int_{\Omega_{\delta_0}}\nabla v \cdot (I-\overline{H}_n)\nabla \partial_tz_n |\det H_n|dx+\int_{\Omega_{\delta_0}}(1-|\det H_n|)\nabla v\cdot \nabla \partial_tz_ndx\notag\\
&\;\;\;+\int_{\Omega_{\delta_0}}(1-|\det H_n|)f(v) \partial_tz_ndx-\int_{\Omega_{\delta_0}} (f(v_n)-f(v)) \partial_tz_n|\det H_n| dx\notag.
\end{align}

Now we estimate each term. For the left hand side of \eqref{eq:3.7}, the first term is
\begin{align*}
    \int_{\Omega_{\delta_0}}\partial_{tt}z_n \partial_tz_n |\det H_n| dx = \frac{1}{2} \frac{d}{dt}\int_{\Omega_{\delta_0}}\left|\partial_t z_n\right|^2 |\det H_n| dx,
\end{align*}
the second term is
\begin{align*}
    \int_{\Omega_{\delta_0}}\partial_t z_n \partial_tz_n |\det H_n| dx = \int_{\Omega_{\delta_0}}\left|\partial_t z_n\right|^2 |\det H_n| dx,
\end{align*}
the third term is
\begin{align*}
    - \int_{\Omega_{\delta_0}}(|\det H_n|-1)(v_t+v_{tt}) \partial_tz_n dx
    &\leq \||\det H_n| -1\|_{\infty} \int_{\Omega_{\delta_0}}\left(- \nabla v\cdot \nabla z_n - f(v) z_n \right)dx\\
    &\leq \frac{1}{2}\||\det H_n| -1\|_{\infty} K(t),
\end{align*}
where $K(t) = \|v\|_1^2 + \|\partial_t z_n\|^2_1 +\|f(v)\|^2_0 +\|\partial_t z_n\|^2_0$, 
and the fourth term is
\begin{align*}
    \int_{\Omega_{\delta_0}}\overline{H}_n\nabla z_n \cdot \overline{H}_n\nabla \partial_t z_n |\det H_n| dx = \frac{1}{2} \frac{d}{dt}\int_{\Omega_{\delta_0}}|\overline{H}_n\nabla z_n|^2|\det H_n|dx.
\end{align*}

For the right hand side of \eqref{eq:3.7}, we have
\begin{align*}
    &\int_{\Omega_{\delta_0}}\left(I-\overline{H}_n\right)\nabla v \cdot \overline{H}_n\nabla \partial_tz_n |\det H_n| dx \\&\;\;\;+\int_{\Omega_{\delta_0}}\nabla v \cdot (I-\overline{H}_n)\nabla  \partial_tz_n |\det H_n|dx+\int_{\Omega_{\delta_0}}(1-|\det H_n|)\nabla v\cdot \nabla \partial_tz_ndx \\
    &\leq \dfrac{1}{2}\Big\{\|I-\overline{H}_n\|_{\infty}\|\det H_n\|_{\infty}\left(\|\overline{H}_n\|_{\infty}+1\right) +\||\det H_n |-1\|_{\infty}\Big\}\left(\|v\|_1^2+\|\partial_t z_n\|_1^2\right),
\end{align*}
the next term can be estimated as
\begin{align*}
    \int_{\Omega_{\delta_0}}(1-|\det H_n|)f(v) \partial_t z_ndx & \leq \||\det H_n |-1\|_{\infty}\|f(v)\|_0\|\partial_tz_n\|_0\\
&\leq \||\det H_n |-1\|_{\infty}\left(\dfrac{1}{2}\|f(v)\|^2_0+\dfrac{1}{2}\|\partial_tz_n\|^2_0\right),
\end{align*}
and the last term can be estimated by
\begin{align*}
    &-\int_{\Omega_{\delta_0}} (f(v_n)-f(v)) \partial_t z_n|\det H_n| dx \leq l \int_{\Omega_{\delta_0}}|z_n \partial_t z_n||\det H_n|dx \\
    &\leq \frac{l}{2}\|z_n \sqrt{|\det H_n|}\|^2_0 +\frac{l}{2}\|\partial_t z_n \sqrt{|\det H_n|}\|^2_0 \leq \frac{\ell}{2}(\|\nabla_{\delta_0} z_n \sqrt{|\det H_n|}\|^2_0 +\|\partial_t z_n \sqrt{|\det H_n|}\|^2_0)\\
    &\leq \ell(\|\overline{H}_n \nabla_{\delta_0}z_n \sqrt{|\det H_n|}\|^2_0+\|\partial_t z_n \sqrt{|\det H_n|}\|^2_0)+\ell \|I - \overline{H}_n\|^2_\infty\|\det H_n\|_{\infty}\|\nabla_{\delta_0}z_n \|_0^2,
\end{align*}
where $\ell = \max\left\{\frac{l}{\lambda_1}, l\right\}$, and $\lambda_1$ is the first eigenvalue of the operator $A_0$.

Consequently we derive
\begin{align*}
    &\frac{1}{2} \frac{d}{dt}\int_{\Omega_{\delta_0}}\left|\partial_t z_n\right|^2 |\det H_n| dx + \frac{1}{2} \frac{d}{dt}\int_{\Omega_{\delta_0}}|\overline{H}_n\nabla z_n|^2|\det H_n|dx \\
    &\leq \||\det H_n| -1\|_{\infty} K(t) \\
    &\;\;\;+\dfrac{1}{2}\Big\{\|I-\overline{H}_n\|_{\infty}\|\det H_n\|_{\infty}\left(\|\overline{H}_n\|_{\infty}+1\right) +\||\det H_n |-1\|_{\infty}\Big\}\left(\|v\|_1^2+\|\partial_t z_n\|_1^2\right)\\
    &\;\;\;+ \ell(\|\overline{H}_n \nabla z_n \sqrt{|\det H_n|}\|^2_0+\|\partial_t z_n \sqrt{|\det H_n|}\|^2_0)\\ 
    &\;\;\;+\ell\|I - \overline{H}_n\|^2_\infty\|\det H_n\|_{\infty}\|\nabla z_n \|_0^2, 
\end{align*}
which is equivalent to
\begin{align}\label{eq:3.11}
    &\frac{d}{dt}\left(e^{-2\ell t}\int_{\Omega_{\delta_0}}\left(\left|\partial_t z_n\right|^2 +|\overline{H}_n\nabla_{\delta_0} z_n|^2\right)|\det H_n|dx\right)\\
    &\leq 2 e^{-2\ell t} \||\det H_n| -1\|_{\infty} K(t) \notag\\
    & \;\;\;+ e^{-2\ell t} \Big\{\|I-\overline{H}_n\|_{\infty}\|\det H_n\|_{\infty}\left(\|\overline{H}_n\|_{\infty}+1\right)\big\}\left(\|v\|_1^2+\|\partial_t z_n\|_1^2\right) \notag \\
    &\;\;\; + e^{-2\ell t}||\det H_n |-1\|_{\infty}\left(\|v\|_1^2+\|\partial_t z_n\|_1^2\right)\notag\\
    &\;\;\;+ e^{-2\ell t} \ell\|I - \overline{H}_n\|^2_\infty\|\det H_n\|_{\infty}\|\nabla_{\delta_0}z_n \|_0^2\notag.
\end{align}
By integrating \eqref{eq:3.11} from $0$ to $t\in [0,T]$, we derive that
\begin{align}\label{eq:3.12}
& \int_{\Omega_{\delta_0}}\left(\left|\partial_t z_n\right|^2 +|\overline{H}_n\nabla_{\delta_0} z_n|^2\right)|\det H_n|dx\\
&\leq e^{2\ell t}\int_{\Omega_{\delta_0}}\left(\left|\partial_t z_n(0)\right|^2 +|\overline{H}_n\nabla_{\delta_0} z_n(0)|^2\right)|\det H_n|dx \notag \\
&\;\;\ + 2e^{2\ell t}\||\det H_n |-1\|_{\infty}\int_0^T K(t)dt \notag \\
& \;\;\;+ e^{2\ell t} \ell\|I - \overline{H}_n\|^2_\infty \|\det H_n\|_\infty\|z_n \|^2_{L^{\infty}(0, T, H^1_0(\Omega_{\delta_0}))}\notag\\
&\;\;\;+e^{2\ell t}\Big\{\|I-\overline{H}_n\|_{\infty}\|\det H_n\|_{\infty}\left(\|\overline{H}_n\|_{\infty}+1\right)+\||\det H_n |-1\|_{\infty}\Big\} \notag\\
&\;\;\;\times  \int_0^T\left(\|v(t)\|_1^2+\|\partial_t z_n(t)\|_1^2\right)dt\notag.
\end{align}

We now choose a constant $C$ independent of $n$ so that 
\begin{align*}
\max \big\{ \int_0^TK(t)dt,~\int_0^T\left(\|v(t)\|_1^2+\|\partial_t z_n(t)\|^2_1\right)dt,~ \|z_n \|^2_{L^{\infty}(0, T, H^1_0(\Omega_{\delta_0}))}\big\} \le C.
\end{align*}
Since $h_n \to h_0$ in $\mathcal{P}(\Omega)$ and $\det$ is a continuous operator, we can see that  $\|\det H_n\|_{\infty}$  and  $\|\overline{H}_n\|_{\infty} $ are uniformly bounded with respect to $n$. Moreover, note that 
$$
\inf\limits_{x \in \Omega_{h_0}}|\det H_n(x)| \ge \frac12 ~\mbox{and}~\inf\limits_{x \in \Omega_{h_0}} |\overline{H}_n(x)| \ge \frac12
$$ for sufficiently large $n \in \mathbb N$, and 
\begin{equation*}
\||\det H_n |-1\|_{\infty}\to 0\mbox{ and }   \|I-\overline{H}_n\|_{\infty}\to 0 \mbox{ as } n\to \infty.
\end{equation*}
Therefore,  from \eqref{eq:3.12} we conclude that
\begin{align*}
    &\frac{1}{4}\int_{\Omega_{\delta_0}}\left(\left|\partial_t z_n\right|^2 +|\nabla z_n|^2\right)dx\\
    &\leq \inf_{x\in \Omega_{\delta_0}}|\det H_n(x)|
    \int_{\Omega_{\delta_0}}\left(\left|\partial_t z_n\right|^2 +|\overline{H}_n(x)\nabla z_n|^2\right)dx\\
    &\leq e^{2\ell T} \|\det H_n\|_{\infty}\int_{\Omega_{\delta_0}}\left(\left|\partial_t z_n(0)\right|^2 +\|\overline{H}_n\|^2_\infty|\nabla z_n(0)|^2\right)dx \to 0 \mbox{ as } n\to \infty,
\end{align*}
which completes the proof of the lemma.
\end{proof}

To prove our main results, we need the following lemma (for the proof, see \cite{LM,LNT}).
\begin{lemma}\label{lemtopo1}
    Let $\Lambda$ be a topological space and $\{T_\lambda(t)\}_{\lambda \in \Lambda}$ $(t \geq 0)$  a family of semidynamical systems on metric spaces $\{X_\lambda\}_{\lambda \in \Lambda}$. Suppose that for each $\lambda \in \Lambda$,

    (1) $T_\lambda(t)$ has the global attractor $\mathcal{A}_\lambda$, and

    (2) there is a bounded neighborhood $D_\lambda$ of $\mathcal{A}_\lambda$ such that for any $t \geq0 $, $\lambda_0 \in \Lambda$, and 
    $\varepsilon>0$, there exists a neighborhood $\mathcal{W}$ of $\lambda_0$ such that for any $\lambda \in \mathcal{W}$, there is an $\varepsilon$-immersion $i: D_\lambda \rightarrow X_{\lambda_0}$ satisfying
   $$ i_\lambda\left(D_\lambda\right) \subset B\left(D_{\lambda_{0}}, \varepsilon\right) ~~  {\rm and } ~~ d(i_\lambda\left(T_{\lambda}\left(x, t\right)\right), T_{{\lambda_0}}\left(i_{\lambda}\left(x\right), t\right ))<\varepsilon ~~\forall x \in \mathcal{A}_{\lambda}.$$
    Then the map $\mathcal{G}: \Lambda \rightarrow\mathcal{M}$ given by $\mathcal{G}(\lambda) = \mathcal{A_\lambda}$ is residually continuous.
\end{lemma}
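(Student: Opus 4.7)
The plan is to establish residual continuity of $\mathcal{G}$ by a Baire category argument on $\Lambda$. First, I would observe that for any map from a topological space into a metric space, the set of continuity points is automatically $G_\delta$: if $U_k$ denotes the set of $\lambda_0 \in \Lambda$ admitting a neighborhood $\mathcal{W}$ with $\sup_{\lambda,\lambda' \in \mathcal{W}} d_{GH}(\mathcal{G}(\lambda), \mathcal{G}(\lambda')) < 1/k$, then each $U_k$ is open and $\bigcap_{k} U_k$ is exactly the continuity set of $\mathcal{G}$. So the task reduces to proving that each $U_k$ is dense in $\Lambda$.

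Next, I would extract the ``upper-semicontinuity half'' by combining hypothesis (2) with the invariance of the attractors. For $\lambda_0 \in \Lambda$ and $\varepsilon > 0$, hypothesis (2) supplies a neighborhood $\mathcal{W}$ of $\lambda_0$ and $\varepsilon$-immersions $i_\lambda: D_\lambda \to X_{\lambda_0}$ with $i_\lambda(D_\lambda) \subset B(D_{\lambda_0}, \varepsilon)$ and $d(i_\lambda(T_\lambda(x, t)), T_{\lambda_0}(i_\lambda(x), t)) < \varepsilon$ for $x \in \mathcal{A}_\lambda$. Writing $\mathcal{A}_\lambda = T_\lambda(\mathcal{A}_\lambda, t)$ and using that $D_{\lambda_0}$ is attracted to $\mathcal{A}_{\lambda_0}$ under $T_{\lambda_0}$, the approximate intertwining gives $i_\lambda(\mathcal{A}_\lambda) \subset B(\mathcal{A}_{\lambda_0}, O(\varepsilon))$ by taking $t$ large. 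A nearest-point projection onto $\mathcal{A}_{\lambda_0}$ then produces an $O(\varepsilon)$-immersion $\tilde{\imath}_\lambda: \mathcal{A}_\lambda \to \mathcal{A}_{\lambda_0}$, supplying the one-sided Gromov--Hausdorff control.

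The delicate step is to upgrade this one-sided control to a full $\varepsilon$-isometry, which additionally demands $\mathcal{A}_{\lambda_0} \subset B(\tilde{\imath}_\lambda(\mathcal{A}_\lambda), \varepsilon)$. My plan is to invoke the classical principle that a real-valued upper-semicontinuous function on a Baire space is continuous on a residual subset. Applying this to the $\varepsilon$-covering numbers $\lambda \mapsto N_\varepsilon(\mathcal{A}_\lambda)$ (which are USC by the previous paragraph) for each rational $\varepsilon > 0$ produces a residual set $\mathcal{R} \subset \Lambda$ on which all of these $N_\varepsilon$ are continuous. At $\lambda_0 \in \mathcal{R}$, matching covering numbers force $\tilde{\imath}_\lambda(\mathcal{A}_\lambda)$ and $\mathcal{A}_{\lambda_0}$ to cover one another up to $O(\varepsilon)$, and a symmetric nearest-point construction yields a backward $O(\varepsilon)$-isometry $j: \mathcal{A}_{\lambda_0} \to \mathcal{A}_\lambda$. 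Hence $d_{GH}(\mathcal{A}_\lambda, \mathcal{A}_{\lambda_0}) = O(\varepsilon)$ whenever $\lambda$ lies in a sufficiently small neighborhood of $\lambda_0 \in \mathcal{R}$, which places $\mathcal{R}$ inside every $U_k$ and gives the desired residual continuity.

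The main obstacle will be producing the backward $\varepsilon$-isometry $j: \mathcal{A}_{\lambda_0} \to \mathcal{A}_\lambda$, since hypothesis (2) only directly guarantees forward immersions from $D_\lambda$ into $X_{\lambda_0}$. The compensating mechanism is the Baire category refinement: by restricting attention to the residual set of continuity points of the USC covering numbers, we recover approximate surjectivity essentially ``for free.'' Making this precise will require careful bookkeeping of the errors compounded by the nearest-point projection, the dynamical intertwining from hypothesis (2), and the attraction rate of $D_{\lambda_0}$ toward $\mathcal{A}_{\lambda_0}$, each contributing $O(\varepsilon)$ corrections that must be absorbed into a single $\varepsilon$-isometry threshold by a final rescaling of the initial choice of $\varepsilon$.
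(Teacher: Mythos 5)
The paper offers no proof of this lemma at all --- it defers entirely to \cite{LM,LNT} --- and your architecture coincides with the one used there: (a) combine hypothesis (2) with the invariance $\mathcal{A}_\lambda=T_\lambda(\mathcal{A}_\lambda,t)$ and the attraction of the bounded set $B(D_{\lambda_0},\varepsilon)$ under $T_{\lambda_0}$ (fixing the large time $t$ \emph{before} choosing $\mathcal{W}$, which the quantifier order in (2) permits) to get $i_\lambda(\mathcal{A}_\lambda)\subset B(\mathcal{A}_{\lambda_0},O(\varepsilon))$ and hence an $O(\varepsilon)$-immersion $\tilde\imath_\lambda:\mathcal{A}_\lambda\to\mathcal{A}_{\lambda_0}$; (b) recover the missing approximate surjectivity only on a residual set, via Baire category applied to integer-valued capacities. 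The conversion of a near-dense immersion into a two-sided pair of $\varepsilon$-isometries by an approximate inverse is also standard and fine.

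The one step that fails as written is the claim that $\lambda\mapsto N_\varepsilon(\mathcal{A}_\lambda)$ is upper semicontinuous for a \emph{fixed} $\varepsilon$ ``by the previous paragraph.'' What the $\eta$-immersion $\tilde\imath_\lambda$ actually yields is a cross-scale comparison --- $N_{2\varepsilon+\eta}(\mathcal{A}_\lambda)\le N_\varepsilon(\mathcal{A}_{\lambda_0})$ for covering numbers, or $s_\varepsilon(\mathcal{A}_\lambda)\le s_{\varepsilon-\eta}(\mathcal{A}_{\lambda_0})$ for separation numbers --- and since these quantities jump as functions of the scale, this does not give $\limsup_{\lambda\to\lambda_0}N_\varepsilon(\mathcal{A}_\lambda)\le N_\varepsilon(\mathcal{A}_{\lambda_0})$. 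Concretely, with the usual closed-ball convention, $\mathcal{A}_{\lambda_0}=\{0,\varepsilon\}$ and $\mathcal{A}_\lambda=\{0,\varepsilon+\eta_\lambda\}$ with $\eta_\lambda\downarrow0$ converge in $d_{GH}$ while the $\varepsilon$-covering number stays at $2$ and drops to $1$ in the limit; so $N_\varepsilon$ is not USC even under full Gromov--Hausdorff convergence. The standard repair, and the device used in the cited proofs, is to replace $N_\varepsilon$ (or $s_\varepsilon$) by its one-sided regularization in the scale variable (equivalently, to intersect over a countable dense set of scales, exploiting monotonicity in $\varepsilon$); the regularized quantities \emph{are} upper semicontinuous given the cross-scale inequality, their continuity points over all rational scales form the residual set $\mathcal{R}$, and your ``a point $y_0$ at distance $\ge C\varepsilon$ from $\tilde\imath_\lambda(\mathcal{A}_\lambda)$ would create an extra $(\varepsilon-\eta)$-separated point, contradicting local constancy'' argument then closes at every $\lambda_0\in\mathcal{R}$. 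With that bookkeeping supplied, your proposal is essentially the proof from \cite{LM,LNT}.
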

\begin{proof}[Proof of Theorem $\ref{thm1}$]
    We prove the theorem by showing that the assumptions of Lemma $\ref{lemtopo1}$ hold. For each $h\in \mathcal P(\Omega)$ with $d_{C^2}(h,1_\Omega)=\delta$, let $\Lambda = \mathcal P(\Omega)$ and $D_\delta$ be an absorbing ball in $X^1_\delta$, which is relatively compact in $X^0_\delta$. Then it is enough to show that for any $T > 0, \varepsilon > 0$ and $h_0 \in  \mathcal P(\Omega)$ with $d_{C^2}(h_0,1_\Omega)=\delta_0$ , there exists a neighborhood $\mathcal{W}$ of $h_{0}$ such that for any $h \in \mathcal{W}$ with $d_{C^2}(h,1_\Omega)=\delta$, there is an $\varepsilon$-immersion $I_\delta: D_{\delta} \rightarrow X^0_{\delta_0}$ satisfying
$$
 I_\delta \left(D_{\delta}\right) \subset B_{X^1_{\delta_0}}(D_{\delta_{0}}, \varepsilon) \text { and } 
\left\|I_\delta \left(T_{\delta}\left(U_{0}, t\right)\right)-T_{\delta_{0}}\left(I_\delta\left(U_{0}\right), t\right)\right\|_{X^0_{\delta_{0}}}<\varepsilon,
$$
where $U_{0} \in \mathcal{A}_{\delta}$ and $t \in[0, T]$.

Suppose not. Then there are $T>0, \varepsilon>0$ and $h_0 \in \mathcal P(\Omega)$ such that for any sequence $\left\{h_{n}\right\}_{n \in \mathbb{N}}$ converging to $h_{0}$, for any $\varepsilon$-immersion $I_{n}: D_{\delta_{n}} \rightarrow X^0_{\delta_{0}}$ with  $I_{n}\left(D_{\delta_{n}}\right) \subset B_{X^1_{\delta_0}}\left(D_{\delta_{0}}, \varepsilon\right)$, there exist $U_{0,n} \in \mathcal{A}_{\delta_{n}}$ and $t_{n} \in[0, T]$ such that
\begin{equation}\label{inequality}
\left\|I_{n}\left(T_{\delta_n}\left(U_{0,n}, t_{n}\right)\right)-T_{\delta_{0}}\left(I_{n}\left(U_{0,n}\right), t_{n}\right)\right\|_{X^0_{\delta_{0}}}>\varepsilon,
\end{equation}
where $\delta_n =d_{C^2}(h_n, 1_\Omega)$.

Define a map  $I_{n}: D_{\delta_{n}} \rightarrow X^0_{\delta_{0}}$ given by
$$
I_n(U) =U \circ (h_n \circ h_0^{-1}) \text{ for } U \in D_{\delta_n}^0.
$$
Then we see that $I_n$ is an $\varepsilon$-immersion for sufficiently large $n$. 
By our assumption, there are $U_{0,n} \in \mathcal{A}_{\delta_{n}}$ and $t_{n} \in[0, T]$ which satisfy the inequality \eqref{inequality}. Let $V_{0,n}=I_{n}\left(U_{0,n}\right)$ for each $n \in \mathbb{N}$. Then we see that $\left\{V_{0,n}\right\}$ is uniformly bounded in $X^1_{\delta_{0}}$. By the compact embedding theorem in \cite{Rb}, $\left\{V_{0,n}\right\}$ has a convergent subsequence in $X^0_{\delta_{0}}$, say
$$
V_{0,n} \rightarrow V_{0} \in X^0_{\delta_{0}} \text { as } n \rightarrow \infty.$$
Moreover, $\{V_{0,n}\}$ has a weakly convergent subsequence in $X^1_{\delta_0}$. By the uniqueness of weak limit, we get $V_0\in X^1_{\delta_0}.$
Consequently we have
$$
\left\|I_{n}\left(T_{\delta_{n}}\left(I_{n}^{-1}\left(V_{0,n}\right), t_{n}\right)\right)-T_{\delta_{0}}\left(V_{0}, t_{n}\right)\right\|_{X^0_{\delta_{0}}} \leq \frac{\varepsilon}{2}
$$
for  sufficiently large $n$. This contradicts to  Lemma $\ref{lem in}$, and so completes the proof of Theorem $\ref{thm1}$.
\end{proof}

\section{{PROOF OF THEOREM \ref{thm2}}}

In this section, we prove that residually the dynamical system on the global attractor $\mathcal A_\delta$ of the equation \eqref{perturbed} varies continuously under perturbations of the domain. To derive the result, we need the following lemma which is crucial for the proof of Theorem \ref{thm2}.

\begin{lemma}\label{continuity on h}
    For any $h_0\in \mathcal P(\Omega)$, there is a neighborhood $\mathcal{U}$ of $h_0$ such that the family of dynamical systems $\{T_\delta: \delta=d_{C^2}(h,1_\Omega)\}_{h \in \mathcal U} $ is equicontinuous on  $\mathcal{A}_\delta \times [0, 1]$.
\end{lemma}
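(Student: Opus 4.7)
The plan is to prove the equicontinuity by showing that for every $h \in \mathcal{U}$ the map $T_\delta$ is jointly Lipschitz on $\mathcal{A}_\delta \times [0,1]$ with a Lipschitz constant that does not depend on $h$. To do this I would separately control (i) the dependence on time, using higher-regularity bounds for orbits on the attractor, and (ii) the dependence on the initial data, using a Gronwall-type energy estimate on the difference of two solutions. The uniformity in $h$ then reduces to the fact that all geometric constants (Poincar\'e constant, first Dirichlet eigenvalue, norms of $I_n, I_n^{-1}$, etc.) depend continuously on the $C^2$-diffeomorphism, so that they remain uniformly bounded on a small enough neighborhood $\mathcal{U}$ of $h_0$.

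For the temporal part, I would first use that $\mathcal{A}_\delta$ is bounded in $X^1_\delta$ uniformly for $h \in \mathcal{U}$ (the bound on the absorbing ball depending continuously on $h$). Applying Lemma \ref{theo21} to any complete orbit in $\mathcal{A}_\delta$, I obtain a constant $M>0$, independent of $h \in \mathcal{U}$ and of $U_0 \in \mathcal{A}_\delta$, such that
$$\|\partial_{tt}u_\delta(t)\|_0^2 + \|\partial_t u_\delta(t)\|_1^2 + \|u_\delta(t)\|_2^2 \le M, \quad t \ge 0.$$
The fundamental theorem of calculus then gives $\|T_\delta(U_0,t)-T_\delta(U_0,s)\|_{X_\delta^0} \le \sqrt{2M}\,|t-s|$. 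For the spatial part, let $u_1,u_2$ be the solutions with initial data $U_{0,1},U_{0,2}\in \mathcal{A}_\delta$ and set $w = u_1-u_2$. Testing the difference equation
$$w_{tt} + w_t - \Delta w = -(f(u_1)-f(u_2))$$
against $w_t$, using $|f(u_1)-f(u_2)| \le l|w|$, Young's inequality, and the Poincar\'e inequality $\|w\|_0 \le \lambda_1(\Omega_\delta)^{-1/2}\|w\|_1$, I obtain
$$\frac{d}{dt}\bigl(\|w\|_1^2+\|w_t\|_0^2\bigr) \le K_\delta\bigl(\|w\|_1^2+\|w_t\|_0^2\bigr),$$
and Gronwall yields $\|T_\delta(U_{0,1},t)-T_\delta(U_{0,2},t)\|_{X^0_\delta}^2 \le e^{K_\delta}\|U_{0,1}-U_{0,2}\|_{X^0_\delta}^2$ for $t\in[0,1]$. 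Combining the two estimates via the triangle inequality produces the desired joint Lipschitz bound.

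The main obstacle is verifying that all constants involved, in particular $M$ and $K_\delta$, can be chosen uniformly on a neighborhood $\mathcal{U}$ of $h_0$. This reduces to: continuous dependence of the first Dirichlet eigenvalue $\lambda_1(\Omega_\delta)$ on $h$ (so that $K_\delta$ remains bounded), uniform control of the constants $C$, $C^*(r_0)$, $C^*(r_1)$ in Lemma \ref{theo21} under small $C^2$-perturbations of $\Omega$, and a uniform choice of absorbing ball for the family $\{T_\delta\}_{h \in \mathcal{U}}$. Each of these follows from the continuity of geometric quantities under $C^2$-domain perturbations, together with the change-of-variable formulas involving $H_n$ and $\overline{H}_n$ already used in the proof of Lemma \ref{lem in}, which show that the bounds $\|\det H_n\|_\infty, \|\overline{H}_n\|_\infty, \|I-\overline{H}_n\|_\infty$ are uniformly controlled near $h_0$.
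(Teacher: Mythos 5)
Your proposal is correct in outline, and the half of it dealing with continuity in the initial data (the Gronwall estimate for $w=u_1-u_2$ tested against $w_t$, with the Lipschitz bound on $f$ and the Poincar\'e inequality) is essentially identical to Step 1 of the paper's proof. Where you genuinely diverge is the continuity in time. The paper does \emph{not} prove a Lipschitz-in-time bound: instead (Step 2) it transports everything to the fixed reference space $X^0_{\delta_0}$ via the $\tfrac{\varepsilon}{6}$-immersion $I_\delta$ already constructed in the proof of Theorem \ref{thm1}, uses the relative compactness of $B(D_{\delta_0},\tfrac{\varepsilon}{6})$ in $X^0_{\delta_0}$ together with the continuity of the single semigroup $T_{\delta_0}(V_0,\cdot)$, and extracts by contradiction a uniform $\gamma_2$ with $\|T_\delta(U_0,s)-U_0\|_{X^0_\delta}<\tfrac{\varepsilon}{2}$ for $s\in[0,\gamma_2]$. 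The point of that soft argument is that the uniformity in $h$ comes for free from the immersion being an $\tfrac{\varepsilon}{6}$-immersion; no uniform-in-$h$ higher-regularity constant is ever needed. Your route, via Lemma \ref{theo21} and the fundamental theorem of calculus, yields the stronger conclusion that $T_\delta$ is Lipschitz in $t$ with an explicit constant $\sqrt{2M}$, but it shifts all the difficulty into precisely the claims you flag at the end: that the constants $C^*(r_0)$, $C^*(r_1)$ of Lemma \ref{theo21} and the radius of the $X^1_\delta$-bound on $\mathcal{A}_\delta$ can be chosen uniformly over a $C^2$-neighborhood of $h_0$. These uniformity statements are plausible and provable with the change-of-variable machinery ($H_n$, $\overline{H}_n$) used in Lemma \ref{lem in}, but they are not established anywhere in the paper, so your proof is complete only modulo that (nontrivial) verification, whereas the paper's compactness argument deliberately avoids it. Both the paper and your proposal share the same final assembly via the semigroup identity $T_\delta(\tilde U_0,t)=T_\delta(T_\delta(\tilde U_0,s),t-s)$ and the triangle inequality.
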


\begin{proof} 
It is enough to show that for any $\varepsilon >0$, there is $\gamma >0$ such that  for any ${h\in \mathcal{U}}$, if $\|U_0 - \tilde{U}_0\|_{X^0_{\delta}} + |t-s| < \gamma$, then
    $$\| T_{\delta}(U_0, t) - T_{\delta}(\tilde{U}_0, s) \|_{X^0_{\delta}} < \varepsilon,$$
 for all $U_0, \tilde{U}_0 \in \mathcal{A}_\delta$   and  $t, s\in [0,1]$.

Step 1.
We first claim that for any $\varepsilon>0$, there is $\gamma_{1}>0$ such that for a small $\delta>0$,  if  $\| U_{0}-$ $\tilde{U}_{0} \|_{X^0_{\delta}}<\gamma_{1}$ for $U_0, \tilde{U}_0 \in X_\delta^0$, then
\begin{equation*}
\|U(t)-\tilde{U}(t)\|_{X^0_{\delta}}<\frac{\varepsilon}{2}, ~~\forall t \in[0,1],
\end{equation*}
where $U(t)=T_{{\delta}}\left(U_{0}, t\right)$ and $\tilde{U}(t)=T_{{\delta}}(\tilde{U}_{0}, t)$. Let $Z(t)=U(t)-\tilde{U}(t)$. Then $Z(t) = (z(t), \partial_t z(t))$ for any  $t \in \mathbb{R}^{+}$, where $z(t)$ is the weak solution of 
\begin{align} \label{diff}
\begin{cases}\partial_{tt} z +\partial_t z-\Delta z+f(u)-f(\tilde{u})=0 & \text { in } \Omega_{{\delta}} \times(0, \infty), \\ z=0 & \text { on } \partial \Omega_{{\delta}} \times(0, \infty), \\ 
z(0,x) = u(0,x)-\tilde{u}(0,x), z_t(0,x) = u_t(0,x)-\tilde{u}_t(0,x)& \text { in } \Omega_{{\delta}}.\end{cases}
\end{align}
Since $U(t), \tilde{U}(t) \in X^1_{\delta}$, we get  $Z(t)\in X^1_{\delta}$. By taking the inner product with $\partial_t z$ on the equation \eqref{diff}, we have
\begin{align}
    (\partial_{tt}z, \partial_t z) + (\partial_{t}z, \partial_t z) +(Az, \partial_t z) = -(f(u)-f(\tilde{u}), \partial_t z),\notag
\end{align}
which is equivalent to
\begin{align*}    
    \frac{1}{2} \frac{d}{dt}\|\partial_t z\|^2_0 + \|\partial_t z\|^2_0+ \frac{1}{2}\frac{d}{dt}\| z\|^2_1\leq &\frac{1}{2}\|f(u)-f(\tilde{u})\|^2_0+\frac{1}{2}\|\partial_t z\|^2_0.
\end{align*}
Using the conditions on $f$ and the inequality $\lambda_1 \|z\|^2_0 \leq \|z\|^2_1$, we have
\begin{align*}
    \frac{1}{2} \frac{d}{dt}\|\partial_t z\|^2_0 + \frac{1}{2}\frac{d}{dt}\| z\|^2_1 \leq &\frac{l}{2\lambda_1}\|z\|^2_1+\frac{1}{2}\|\partial_t z\|^2_0.
\end{align*}
Hence,  we get 
\begin{align}    \label{C}
    \frac{1}{2}\frac{d}{dt}\|Z(t)\|^2_{X^0_{\delta}}\leq \left(\frac{l}{2\lambda_1}+\frac{1}{2}\right)(\| z\|^2_1 +\|\partial_t z\|^2_0) 
   : = C\|Z\|^2_{X^0_{\delta}}.
\end{align}
Take $\gamma_1 >0$ satsifying $\gamma_1 < \frac{\varepsilon}{2e^C}$, where $C$ is a constant satisfying  \eqref{C}.
By the Gronwall's inequality, if $\|Z(0)\|_{X^0_{\delta}} < \gamma_1$, we have
\begin{align}\label{Lipschitz}
    \|Z\|^2_{X^0_{\delta}}\leq \|Z(0)\|^2_{X^0_{\delta}} e^{2Ct}<\frac{\varepsilon}{2}.
\end{align}

Step 2.
Next we show that for any $\varepsilon>0$ and $h_{0} \in \mathcal P(\Omega)$ with $d_{C^2}(h_0,1_{\Omega})=\delta_0$, there exist a neighborhood $\mathcal{U}$ of $h_{0}$ and a constant $\gamma_{2}>0$ such that for any $h \in \mathcal{U}$ with $d_{C^2}(h,1_{\Omega})=\delta$,
\begin{equation*}
\left\|T_{\delta}\left(U_{0}, s\right)-U_{0}\right\|_{X^0_{{\delta}}}<\frac{\varepsilon}{2}, \quad \forall\left(U_{0}, s\right) \in \mathcal{A}_{{\delta}} \times\left[0, \gamma_{2}\right] . 
\end{equation*}
For any $\varepsilon>0$, as in the proof of Theorem $\ref{thm1}$, we can take a neighborhood $\mathcal{U}$ of $h_{0}$ in $\mathcal P(\Omega)$ such that for any $h \in \mathcal{U}$, there is an $\frac{\varepsilon}{6}$-immersion $I_{{\delta}}: D_{{\delta}} \rightarrow {X^0_{{\delta}_{0}}}$ satisfying
\begin{align} \label{conti}
&I_\delta\left(D_{{\delta}}\right) \subset B\left(D_{{\delta}_{0}}, \frac{\varepsilon}{6}\right), \text { and } \notag \\
&\left\|I_{{\delta}}\left(T_{{\delta}}\left(U_{0}, s\right)\right)-T_{{\delta}_{0}}\left(I_{{\delta}}\left(U_{0}\right), s\right)\right\|_{X^0_{{\delta}_{0}}}<\frac{\varepsilon}{6}, \quad \forall(h, s) \in \mathcal{U}_2 \times[0,1],
\end{align}
where $d_{C^2}(h,1_{\Omega}) = \delta$, and $D_\delta$ is an absorbing neighborhood of $\mathcal A_\delta$.
Let $B_{{\delta}_0} = B(D_{{\delta}_0}, \frac{\varepsilon}{6})$ in $X^1_{{\delta}_0}$. Since $B_{{\delta}_{0}}$ is relatively compact in $X^0_{{\delta}_{0}}$ and $I_{{\delta}}\left(U_{0}\right) \in B_{{\delta}_{0}}$ for any $U_{0} \in \mathcal{A}_{{\delta}}$ (by definition of the immersion $I_{\delta}$), there exists $\gamma_{2}>0$ such that
\begin{equation*}
\left\|T_{{\delta}_{0}}\left(I_{{\delta}}\left(U_{0}\right), s\right)-I_{{\delta}}\left(U_{0}\right)\right\|_{X^0_{{\delta}_{0}}}<\frac{\varepsilon}{6}, \quad \forall\left(U_{0}, s\right) \in \mathcal{A}_{{\delta}} \times\left[0, \gamma_{2}\right] . 
\end{equation*}

Suppose not. Then for each $n \in \mathbb{N}$, there are $s_{n} \in[0,1 / n]$ and $U_{0,n} \in \mathcal{A}_{{\delta}}$ satisfying
$$
\left\|T_{{\delta}_{0}}\left(I_{{\delta}}\left(U_{0,n}\right), s_{n}\right)-I_{{\delta}}\left(U_{0,n}\right)\right\|_{X^0_{{\delta}_{0}}}\geq \frac{\varepsilon}{6} .
$$
Since $B_{{\delta}_{0}}$ is relatively compact in $X^0_{{\delta}_{0}}$, we may assume that $I_{{\delta}}\left(U_{0,n}\right)$ converges to a point, say $V_{0} \in X^0_{{\delta}_{0}}$, as $n$ tends to infinity. By $\eqref{Lipschitz}$, we have
$$
\left\|T_{{\delta}_{0}}\left(V_{0}, s_{n}\right)-T_{{\delta}_{0}}\left(I_{{\delta}}\left(U_{0,n}\right), s_{n}\right)\right\|_{X^0_{{\delta}_{0}}} <e^{C}\left\|V_{0}-I_{{\delta}}\left(U_{0,n}\right)\right\|_{X^0_{{\delta}_{0}}}.
$$
Hence we can choose $N_{1}>0$ such that for any $n>N_{1}$,
$$
\max \left\{\left\|T_{{\delta}_{0}}\left(V_{0}, s_{n}\right)-T_{{\delta}_{0}}\left(I_{{\delta}}\left(U_{0,n}\right), s_{n}\right)\right\|_{X^0_{{\delta}_{0}}},\left\|I_{{\delta}}\left(U_{0,n}\right)-V_{0}\right\|_{X^0_{{\delta}_{0}}}\right\}<\frac{\varepsilon}{18}.
$$
Note that $s_{n} \rightarrow 0$ as $n \rightarrow \infty$. By the continuity of $T_{{\delta}_{0}}\left(V_{0},.\right)$, there exits $N_{2}>0$ such that
$$
\left\|T_{{\delta}_{0}}\left(V_{0}, s_{n}\right)-V_{0}\right\|_{X^0_{{\delta}_{0}}}<\frac{\varepsilon}{18}, \quad \forall n>N_{2}.
$$
Hence for any $n>\max \left\{N_{1}, N_{2}\right\}$, we get
\begin{align*}
0=\left\|T_{{\delta}_{0}}\left(V_{0}, 0\right)-V_{0}\right\|_{X^0_{{\delta}_{0}}} \geq & \left\|T_{{\delta}_{0}}\left(I_{{\delta}}\left(U_{0,n}\right), s_{n}\right)-I_{{\delta}}\left(U_{0,n}\right)\right\|_{X^0_{{\delta}_{0}}}\\
& -\left\|T_{{\delta}_{0}}\left(I_{{\delta}}\left(U_{0,n}\right), s_{n}\right)-T_{{\delta}_{0}}\left(V_{0}, s_{n}\right)\right\|_{X^0_{{\delta}_{0}}}\\
& -\left\|T_{{\delta}_{0}}\left(V_{0}, s_{n}\right)-T_{{\delta}_{0}}\left(V_{0}, 0\right)\right\|_{X^0_{{\delta}_{0}}} \\
& -\left\|I_{{\delta}}\left(U_{n, 0}\right)-V_{0}\right\|_{X^0_{{\delta}_{0}}}>0 .
\end{align*}
The contradiction proves \eqref{conti}.
Consequently we deduce that for any $h \in \mathcal{U}, U_{0} \in \mathcal{A}_{\delta}$ and $s \in\left[0, \gamma_{2}\right]$,
\begin{align*}
\left\|T_{{\delta}}\left(U_{0}, s\right)-U_{0}\right\|_{X^0_{{\delta}}} \leq & \frac{\varepsilon}{6}+\left\|I_{{\delta}}\left(T_{{\delta}}\left(U_{0}, s\right)\right)-I_{{\delta}}\left(U_{0}\right)\right\|_{X^0_{{\delta}_{0}}} \\
\leq & \frac{\varepsilon}{6}+\left\|I_{{\delta}}\left(T_{{\delta}}\left(U_{0}, s\right)\right)-T_{{\delta}_{0}}\left(I_{{\delta}}\left(U_{0}\right), s\right)\right\|_{X^0_{{\delta}_{0}}} \\
& +\left\|T_{{\delta}_{0}}\left(I_{\delta}\left(U_{0}\right), s\right)-I_{{\delta}}\left(U_{0}\right)\right\|_{X^0_{{\delta}_{0}}} \leq \frac{\varepsilon}{2}.
\end{align*}

Finally, let $\gamma =\min\{ \gamma_1 , \gamma_2\}$. For any $h \in \mathcal{U}$ with  $d_{C^2}(h,1_{\Omega})=\delta$, $U_{0}$, $\tilde{U}_{0} \in A_{{\delta}}$ and $t, s \in[0,1]$, we suppose $\|U_{0}-\tilde{U}_{0}\|_{X^0_{\delta}}+|t-s|<\gamma$. Then, by Steps 1 and 2, we have
\begin{align*}
\|T_{{\delta}}\left(U_{0}, t\right)-T_{{\delta}}(\tilde{U}_{0}, s)\|_{X^0_{{\delta}}} \leq & \|T_{{\delta}}\left(U_{0}, t\right)-T_{{\delta}}(\tilde{U}_{0}, t)\|_{X^0_{{\delta}}} +\|T_{{\delta}}(\tilde{U}_{0}, t)-T_{{\delta}}(\tilde{U}_{0}, s)\|_{X^0_{{\delta}}} \\
\leq & \frac{\varepsilon}{2}+\|T_{{\delta}}\left(T_{{\delta}}\left(U_{0}, s\right), t-s\right)-T_{{\delta}}(\tilde{U}_{0}, s)\|_{X^0_{{\delta}}} \leq \varepsilon,
\end{align*}
and so completes the proof of Lemma \ref{continuity on h}.
\end{proof}
\noindent
\begin{proof}[Proof of Theorem $\ref{thm2}.$]
It is enough to show that there exists a residual subset $\mathcal R$ of $P(\Omega)$ such that the map $\Phi:\mathcal R \to  \mathcal{DS}$ given by $\Phi(h) = T_\delta$ is continuous, where $\delta=d_{C^2}(h, 1_{\Omega})$, and $T_\delta$ is the dynamical system on its global attractor $\mathcal {A}_\delta$ induced by the equation \eqref{perturbed}.

To prove this, for any $h, k \in \mathcal{P}(\Omega)$, we let $ d_{C^2}(h,1_{\Omega})=\delta$ and $d_{C^2}(k,1_{\Omega})=\delta'$. Define an isomorphism $i_{hk}: L^2(\Omega_{\delta}) \rightarrow L^2(\Omega_{\delta^\prime})$ by
$$
i_{hk}(u) = u \circ (h \circ k^{-1}),\quad \forall  u \in L^2(\Omega_\delta).
$$
Then we observe that the restriction of $i_{hk}$ on $H^1_0(\Omega_\delta)$ is also an isomorphism to $H^1_0(\Omega_{\delta^\prime})$.

Let $I_{hk}: X^0_{\delta}\rightarrow X^0_{\delta^\prime}$ be the map defined by
$$
I_{hk}(U) =(i_{hk}(u), i_{hk}(v)),\quad \forall U=(u, v) \in X^0_{\delta}.
$$
Then $I_{hk}$ is an isomorphism, and we see that for any $\varepsilon>0$, there is $\gamma>0$ such that if $d_{C^2} (h,k)< \gamma$, then $I_{hk}$ and $I_{kh}$ are $\varepsilon$-immersions.

We first claim that there is a residual subset $\mathcal R$ such that for any $k \in \mathcal R$ with $d_{C^2}(k,1_{\Omega}) = \delta^\prime$ and $\varepsilon>0$, there is a neighborhood $\mathcal W$ of $k$ such that for any $h \in W$ with $d_{C^2}(h,1_{\Omega}) = \delta$, the $\varepsilon$-immersion $I_{hk}$ satisfies
$$
I_{hk}(\mathcal A_\delta) \subset B(\mathcal A_{\delta^\prime}, \varepsilon)\quad \mathcal A_{\delta^{\prime}} \subset B(I_{hk}(\mathcal A_{\delta})),
$$
where $B(\mathcal A_{\delta^{\prime}}, \varepsilon) = \{U \in X_{\delta^\prime}^0: \| U-V\|_{X_{\delta^{\prime}}^0} < \varepsilon ~ \mbox{for~some}~V \in  \mathcal A_{\delta^\prime}\}$.
This proof can be carried out in a similar way to Step 1 of the proof of Theorem 1.2 in \cite{LNT} .

Next if we apply Lemma \ref{continuity on h}, we can prove that the map $\Phi: \mathcal{P}(\Omega) \to \mathcal{DS}$ given by $\Phi(h) =T_\delta$ is continuous on $\mathcal{R}$, whose  proof is similar to Step 2 of the proof of Theorem 1.2 in \cite{LNT}.
\end{proof}

\vskip 5mm

\noindent{\bf Acknowledgements.}
This paper is supported by the NRF grant funded by the Ministry of Science and ICT (RS-2023-NR076395) and the NRF grant funded by the Ministry of Education (RS-2024-00442775).

\end{document}